\newtheorem{theorem}{Theorem}
\newtheorem{corollary}[theorem]{Corollary}
\newtheorem{lemma}[theorem]{Lemma}
\newtheorem{definition}[theorem]{Definition}
\newtheorem{proposition}[theorem]{Proposition}
\newtheorem{remark}[theorem]{Remark}
\providecommand{\keywords}[1]
{
  \small	
  \textbf{\textbf{Keywords}:} #1
}
\newcommand{\ee}{\varepsilon}
\newcommand{\EE}{\mathbb{E}}
\newcommand{\PP}{\mathbb{P}}
\newcommand{\RR}{\mathbb{R}}
\newcommand{\A}{\mathcal{A}}
\newcommand{\R}{\mathbb{R}}
\newcommand{\Sym}{\mathbb{S}}
\newcommand{\dint}{\mathrm{d}}
\algnewcommand\INPUT{\item[\textbf{Input:}]}%
\algnewcommand\OUTPUT{\item[\textbf{Output:}]}%
\title{Spectrahedral Regression}
\author{Eliza O'Reilly and Venkat Chandrasekaran}
\date{}
\begin{document}

\maketitle

\begin{abstract}
   Convex regression is the problem of fitting a convex function to a data set consisting of input-output pairs. We present a new approach to this problem called spectrahedral regression, in which we fit a spectrahedral function to the data, i.e. a function that is the maximum eigenvalue of an affine matrix expression of the input. This method represents a significant generalization of polyhedral (also called max-affine) regression, in which a polyhedral function (a maximum of a fixed number of affine functions) is fit to the data. We prove bounds on how well spectrahedral functions can approximate arbitrary convex functions via statistical risk analysis. We also analyze an alternating minimization algorithm for the non-convex optimization problem of fitting the best spectrahedral function to a given data set. We show that this algorithm converges geometrically with high probability to a small ball around the optimal parameter given a good initialization. Finally, we demonstrate the utility of our approach with experiments on synthetic data sets as well as real data arising in applications such as economics and engineering design. 
\end{abstract}

\keywords{convex regression, support function estimation, semidefinite programming, approximation of convex bodies.}


\section{Introduction}

The problem of identifying a function that approximates a given dataset of input-output pairs is a central one in data science.  In this paper we consider the problem of fitting a convex function to such input-output pairs, a task known as \emph{convex regression}.   Concretely, given data $\{x^{(i)}, y^{(i)}\}_{i=1}^n \subset \R^d \times \R$, our objective is to identify a convex function $\hat{f}$ such that $\hat{f}(x^{(i)}) \approx y^{(i)}$ for each $i=1,\dots,n$.  In some applications, one seeks an estimate $\hat{f}$ that is convex and positively homogenous; in such cases, the problem may equivalently be viewed as one of identifying a convex set given (possibly noisy) support function evaluations.   Convex reconstructions in such problems are of interest for several reasons.  First, prior domain information in the context of a particular application might naturally lead a practitioner to seek convex approximations.  One prominent example arises in economics in which the theory of marginal utility implies an underlying convexity relationship.  Another important example arises in computed tomography applications in which one has access to support function evaluations of some underlying set, and the goal is to reconstruct the set; here, due to the nature of the data acquisition mechanism, the set may be assumed to be convex without loss of generality.  A second reason for preferring a convex reconstruction $\hat{f}$ is computational -- in some applications the goal is to subsequently use $\hat{f}$ as an objective or constraint within an optimization formulation.  For example, in aircraft design problems, the precise relationship between various attributes of an aircraft is often not known in closed-form, but input-output data are available from simulations; in such cases, identifying a good convex approximation for the input-output relationship is useful for subsequent aircraft design using convex optimization.

A natural first estimator one might write down is:
\begin{equation} \label{eq:lse}
	\hat{f}^{(n)}_{\mathrm{LSE}} \in {\arg\min}_{f : \R^d \rightarrow \R \text{ is a convex function}} ~~~ \frac{1}{n} \sum_{i=1}^n (y^{(i)} - f(x^{(i)}))^2.
\end{equation}
There always exists a polyhedral function that attains the minimum in \eqref{eq:lse}, and this function may be computed efficiently via convex quadratic programming \cite{PrinceWillsky1990, PrinceWillsky1991, SeijoSen2011}. However, this choice suffers from a number of drawbacks. 
For a large sample size, the quality of the resulting estimate suffers from over-fitting as the complexity of the reconstruction grows with the number of data points.  For small sample sizes, the quality of the resulting estimate is often poor due to noise.  From a statistical perspective, the estimator may also be suboptimal \cite{KurConvexRegression, KurSupport}.  For these reasons, it is of interest to regularize the estimator by considering a suitably constrained class of convex functions.

The most popular approach in the literature to penalize the complexity of the reconstruction in \eqref{eq:lse} is to fit a polyhedral function that is representable as the maximum of at most $m$ affine functions (for a user-specified choice of $m$) to the given data  \cite{Guntuboyina2012, HanWellner2016, HannahDunson2013, MagnaniBoyd2009, Balazs2015}, which is based on the observation that convex functions are suprema of affine functions.  However, this approach is inherently restrictive in situations in which the underlying phenomenon is better modeled by a non-polyhedral convex function, which may not be well-approximated by $m$-polyhedral functions.  Further, in settings in which the estimated function is subsequently used within an optimization formulation, the above approach constrains one to using linear-programming (LP) representable functions.  See Figure \ref{f:3dWage} for a demonstration with economic data. 
\begin{figure}[h!]
     \centering
     \includegraphics[width=\textwidth]{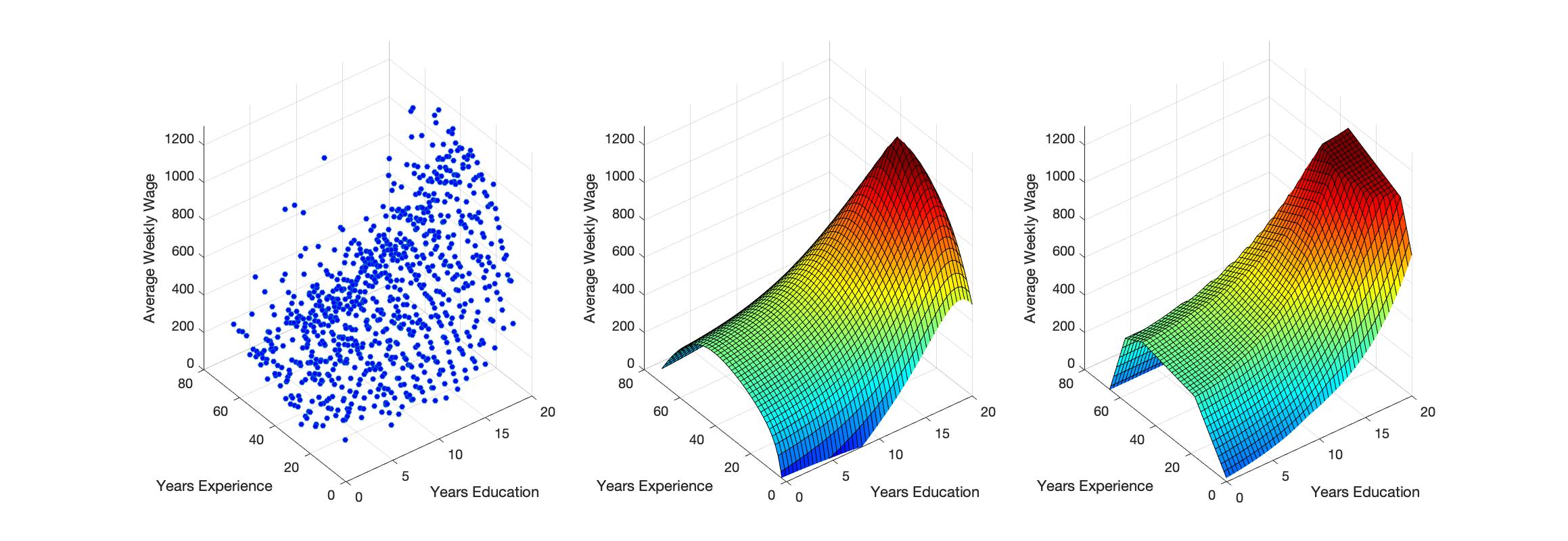}
     \caption{Models for average weekly wage based on years of experience and education using spectrahdedral and polyhedral regression. From left to right: the underlying data set, the spectrahedral ($m= 3$) estimator, and the polyhedral ($m = 6$) estimator. A transformation in the years of education covariate gives a data set that is approximately convex.}
     \label{f:3dWage}
     \vspace{-.2in}
 \end{figure}

To overcome these limitations, we consider fitting spectrahedral functions to data. To define this model class, let $\mathbb{S}^m_k$ denote the set of $m \times m$ real symmetric matrices that are block diagonal with blocks of size at most $k \times k$, with $k$ dividing $m$. 
\begin{definition}\label{def:specfxn}
	Fix positive integers $m, k$ such that $k$ divides $m$. A function $f : \R^d \rightarrow \R$ is called $(m,k)$-spectrahedral if it can be expressed as follows:
	\begin{equation*}
		f(x) = \lambda_{\max}\left(\sum_{i=1}^d A_i x_i + B \right),
	\end{equation*}
	where $A_1, \dots, A_d, B \in \Sym_k^m$. 
	Here $\lambda_{\max}(\cdot)$ is the largest eigenvalue of a matrix.
\end{definition}
An $(m,k)$-spectrahedral function is convex as it is a composition of a convex function with an affine map.  For the case $k=1$, the matrices $A_1,\dots,A_d,B$ are all diagonal and we recover the case of $m$-polyhedral functions.  The case $k=2$ corresponds to second-order-cone-programming (SOCP) representable functions, and the case $k=m$ utilizes the expressive power of semidefinite programming (SDP).  In analogy to the enhanced modeling power of SOCP and SDP in comparison to LP, the class of $(m,k)$-spectrahedral functions is much richer than the set of $m$-polyhedral functions for general $k > 1$. For instance, when $k = 2$ this class contains the function $f(x) = \|x\|_2$. For estimates that are $(m,k)$-spectrahedral, subsequently emplying them within optimization formulations yields optimization problems that can be solved via SOCP and SDP.

An $(m,k)$-spectrahedral function that is positively homogenous (i.e., $B=0$ in the definition above) is the support function of a convex set that is expressible as the linear image of an $(m,k)$-spectraplex defined, for positive integers $k$ and $m$ such that $k$ divides $m$, by
\begin{equation} \label{eq:spectraplex}
 \mathcal{S}_{m,k} = \{M \in \Sym_k^m ~|~ \mathrm{tr}(M) = 1, ~ M \succeq 0\}. 
\end{equation}
We refer to the collection of linear images of $\mathcal{S}_{m,k}$ as {\em $(m,k)$-spectratopes}. Again, the case $k=1$ corresponds to the $m$-simplex, and the corresponding linear images are $m$-polytopes.  Thus, in the positively homogenous case, our proposal is to identify a linear image of an $(m,k)$-spectraplex to fit a given set of support function evaluations.  We note that the case $k=m$ was recently considered in \cite{Soh19}, and we comment in more detail on the comparison between the present paper and \cite{Soh19} in Section \ref{sec:related}.

\subsection{Our Contributions}
We consider the following constrained analog of \eqref{eq:lse}:
\begin{equation} \label{eq:speclse}
	\hat{f}^{(n)}_{m,k} \in {\arg\min}_{f : \R^d \rightarrow \R \text{ is an } (m,k)\text{-spectrahedral function}} ~~~ \frac{1}{n} \sum_{i=1}^n (y^{(i)} - f(x^{(i)}))^2.
\end{equation}
Here the parameters $m,k$ are specified by the user.

First, we investigate in Section \ref{s:expressive} the expressive power of $(m,k)$-spectrahedral functions.  Our approach to addressing this question is statistical in nature and it proceeds in two steps.  We begin by deriving upper bounds on the error of the constrained estimator \eqref{eq:speclse} (under suitable assumptions on the data $\{(x^{(i)},y^{(i)})\}_{i=1}^n$ supplied to the estimator \eqref{eq:speclse}), which entails computing pseudo-dimension of a set that captures the complexity of the class of spectrahedral functions.  As is standard in statistical learning theory, this error decomposes into an estimation error (due to finite sample size) and an approximation error (due to constraining the estimator \eqref{eq:speclse} to a proper subclass of convex functions).  We then compare these to known minimax lower bounds on the error of any procedure for identifying a convex function \cite{Guntuboyina2012,Balazs2015}.  Combined together, for the case of fixed $k$ (as a function of $m$) we obtain tight lower bounds on how well an $(m,k)$-spectrahedral function can approximate a Lipschitz convex function over a compact convex domain, and on how well a linear image of an $(m,k)$-spectraplex can approximate an arbitrary convex body (see Theorem \ref{t:approx}).  To the best of our knowledge, such bounds have only been obtained previously in the literature for the case $k=1$, e.g., how well $m$-polytopes can approximate arbitrary convex bodies \cite{Bronshtein2008, Dudley74}.

Second, we investigate in Section \ref{s:computational} the performance of an alternating minimization procedure to solve \eqref{eq:speclse} for a user-specified $m,k$.  This method is a natural generalization of a widely-used approach for fitting $m$-polyhedral functions, and it was first described in \cite{Soh19} for the case of positively homogenous convex regression with $k=m$.  We investigate the convergence properties of this algorithm under the following problem setup.  Consider an $(m,k)$-spectrahedral function $f_* : \R^d \rightarrow \R$.  Assuming that the covariates $x^{(i)}, ~ i=1,\dots,n$ are i.i.d. Gaussian and each $y^{(i)} = f_*(x^{(i)}) + \varepsilon_i, ~ i=1,\dots,n$ for i.i.d. Gaussian noise $\varepsilon_i$, we show in Theorem \ref{t:conv_bnd} that the alternating minimization algorithm is locally linearly convergent with high probability given sufficiently large $n$.   A key feature of this analysis is that the requirements on the sample size $n$ and the assumptions on the quality of the initial guess are functions of a `condition number' type quantity associated to $f_*$, which (roughly speaking) measures how $f_*$ changes if the parameters that describe it are perturbed.

Finally, in Section \ref{s:examples} we give empirical evidence of the utility of our estimator \eqref{eq:speclse} on both synthetic datasets as well as data arising from real-world applications.

\subsection{Related Work}\label{sec:related}
There are three broad topics with which our work has a number of connections, and we describe these in detail next.

First, we consider our results in the context of the recent literature in optimization on lift-and-project methods (see the recent survey \cite{Fawzi2020LiftingFS} and the references therein).  This body of work has studied the question of the most compact description of a convex body as a linear image of an affine section of a cone, and has provided lower bounds on the sizes of such descriptions for prominent families of cone programs such as LP, SOCP, and SDP.  This literature has primarily considered exact descriptions, and there is relatively little work on lower bounds for approximate descriptions (with the exception of the case of polyhedral descriptions).  The present paper may be viewed as an approximation-theoretic complement to this body of work, and we obtain tight lower bounds on the expressive power of $(m,k)$-spectrahedral functions (and on linear images of the $(m,k)$-spectraplex) for bounded $k > 1$.

Second, recent results provide algorithmic guarantees for the widely used alternating minimization procedure for fitting $m$-polyhedral functions \cite{Ghoshetal2020Gaussian, Ghoshetal2020SmallBall}; this work gives both a local convergence analysis as well as a dimension reduction strategy to restrict the space over which one needs to consider random initializations.  In comparison, our results provide only a local convergence analysis, although we do so for a more general alternating minimization procedure that is suitable for fitting general $(m,k)$-spectrahedral functions.  We defer the study of a suitable initialization strategy to future work (see Section \ref{sec:discussion}).

Finally, we note that there is prior work on fitting non-polyhedral functions in the convex regression problem.  Specifically, \cite{Hoburgetal2015} suggests various heuristics to fit a log-sum-exp type function, which may be viewed as a `soft-max' function.  However, these methods do not come with any approximation-theoretic or algorithmic guarantees.  In recent work, \cite{Soh19} considered the problem of fitting a convex body given support function evaluations, i.e., the case of positively homogenous convex regression, and proposed reconstructions that are linear images of an $(m,m)$-spectraplex; in this context, \cite{Soh19} provided an asymptotic statistical analysis of the associated estimator and first described an alternating minimization procedure that generalized the $m$-polyhedral case, but with no algorithmic guarantees.  In comparison to \cite{Soh19}, the present paper considers the more general setting of convex regression and also allows for the spectrahedral function to have additional block-diagonal structure, i.e., general $(m,k)$-spectrahedral reconstructions.  Further, we provide algorithmic guarantees in the form of local convergence analysis of the alternating minimization procedure and we provide approximation-theoretic guarantees associated to $(m,k)$-spectrahedral functions (which rely on finite sample rather than asymptotic statistical analysis).

\subsection{Notation}
For $\A = (A_1, \ldots, A_d) \in (\mathbb{S}^m_k)^d$, we define for $x \in \RR^d$ the linear pencil $\A[x]: = \sum_{i=1}^d x_i A_i \in \mathbb{S}^m_k$. The usual vector $\ell_2$ norm is denoted $\|\cdot\|_2$ and the sup norm by $\|\cdot\|_{\infty}$. The matrix Frobenius norm is denoted by $\|\cdot\|_F$, and the matrix operator norm by $\|\cdot\|_{op}$. We denote by $B_d(x, R)$ the ball in $\RR^d$ centered at $x \in \RR^d$ with radius $R > 0$.

\section{Expressiveness of spectrahedral functions via statistical risk bounds}\label{s:expressive}

In this section, we first obtain upper bounds on the risk of the $(m,k)$-spectrahedral estimator in \eqref{eq:speclse} decomposed into the approximation error and estimation error. We then compare this upper bound with known minimax lower bounds on the risk for certain classes of convex functions. This provides lower bounds on the approximation error of $(m,k)$-spectrahderal functions to these functions classes.

\subsection{General Upper Bound on the Risk}

To obtain an upper bound on the risk of the estimator \eqref{eq:speclse}, we use the general bound obtained in \cite[Section 4.1]{HanWellner2016}. To give the statement, consider first the following general framework. Let $(x^{(1)}, y^{(1)}), \ldots, (x^{(n)},y^{(n)})$ be observations satisfying
\begin{align}\label{e:gen_model}
y^{(i)} = f_*(x^{(i)}) + \ee_i,
\end{align}
for a function $f_* : \RR^d \to \RR$ contained in some function class $\mathcal{F}$. We assume the errors $\ee_i$ are i.i.d. mean zero Gaussians with variance $\sigma^2$. Now, let $\{\mathcal{F}_{\ell}\}_{\ell \in \mathbb{N}}$ be a collection of function classes of growing complexity with $m$. For each $m$, define the constrained least squares estimator
\[\hat{f}^{(n)}_m := \mathrm{argmin}_{f \in \mathcal{F}_m} \sum_{i=1}^n (y^{(i)} - f(x^{(i)}))^2.\]
We consider the risk of this estimator in 
the random design setting\footnote{One can also consider the risk in the fixed design setting, where one assumes the covariates $\{x^{(i)}\}_{i=1}^n$ are fixed, and risk bounds proved in \cite{HanWellner2016} include this case. The results in this work can be directly extended to this case as well by applying the corresponding results.}, 
where we assume $x^{(1)}, \ldots, x^{(n)}$ are i.i.d. random vectors in $\RR^d$ with distribution $\mu$. The risk 
is then defined by
\[\|\hat{f}^{(n)}_{\ell} - f_*\|^2_{\mu} := \int_{\RR^d} (\hat{f}^{(n)}_m(x) - f_*(x))^2 \dint \mu(x).\]
Additionally, assume that both $f_*$ and $\mathcal{F}_m$ are uniformly bounded by a positive and finite constant $\Gamma$.

As is standard in the theory of empirical processes, the rate is determined by the complexity of the class $\mathcal{F}_m$, which in this case is determined by the pseudo-dimension of the set
\begin{align}\label{e:HmX}
H_{m} := \{z \in \RR^n: z = (f(x^{(1)}), \dots, f(x^{(n)})) \text{ for some } f \in \mathcal{F}_m\}.
\end{align}
Recall that the pseudo-dimension of subset $B \subset \RR^n$, denoted by Pdim($B$), is defined as the maximum cardinality of a subset $\sigma \subseteq \{1, \ldots, n\}$ for which there exists $h \in \RR^n$ such that for every $\sigma' \subseteq \sigma$, one can find $a \in A$ with $a_i < h_i$ for $i \in \sigma'$ and $a_i > h_i$ for $i \in \sigma \backslash \sigma'$. 

Theorem 4.2 in \cite{HanWellner2016}, stated below, provide an upper bound on the risk of $\hat{f}^{(n)}_m$ 
split into approximation error and estimation error. 

\begin{theorem}\label{t:wellnerriskbound}
Let $n \geq 7$. Suppose there is a constant $D_{m} \geq 1$ such that $\mathrm{Pdim}(H_{m}) \leq D_{m}$. Then, there exists an absolute constants $c$ such that
\begin{align}
    \|\hat{f}^{(n)}_{m} - f_*\|_{\mu}^2 \leq c \left( \inf_{f \in \mathcal{F}_{m}} \|f - f_*\|_{\mu}^2 + \frac{\max\{\sigma^2, \Gamma^2\} D_{m} \log n}{n}\right).
\end{align}
\end{theorem}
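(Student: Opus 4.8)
The plan is to prove this oracle inequality by the standard three‑step empirical‑process argument: a \emph{basic inequality} exploiting optimality of $\hat f^{(n)}_m$, a uniform bound on the noise cross‑term governed by the complexity of $H_m$, and a transfer from the empirical seminorm to the population norm $\|\cdot\|_\mu$. (As this is Theorem 4.2 of \cite{HanWellner2016}, one may also simply invoke it; what follows is the route a direct proof takes.) Write $\hat f := \hat f^{(n)}_m$, let $\bar f \in \arg\min_{f \in \mathcal{F}_m}\|f - f_*\|_\mu^2$ be the population‑best approximant in the class (or a near‑minimizer), so the approximation‑error term of the statement equals $\|\bar f - f_*\|_\mu^2$, and set $\|g\|_n^2 := \tfrac1n\sum_{i=1}^n g(x^{(i)})^2$.

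\emph{Basic inequality and the Gaussian cross‑term.} Because $\hat f$ minimizes $\sum_i(y^{(i)} - f(x^{(i)}))^2$ over $\mathcal{F}_m \ni \bar f$, substituting $y^{(i)} = f_*(x^{(i)}) + \ee_i$ from \eqref{e:gen_model}, expanding the squares, and rearranging gives
\[
\|\hat f - f_*\|_n^2 \;\le\; \|\bar f - f_*\|_n^2 \;+\; \frac{2}{n}\sum_{i=1}^n \ee_i\big(\hat f(x^{(i)}) - \bar f(x^{(i)})\big).
\]
Conditionally on the design, the last term is a Gaussian linear process indexed by the increment $\hat f - \bar f$ evaluated at the data, hence by a point of $H_m$ translated by $(\bar f(x^{(1)}),\dots,\bar f(x^{(n)}))$. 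The hypothesis $\mathrm{Pdim}(H_m)\le D_m$ combined with Haussler's bound yields $\log N(\epsilon, H_m, \|\cdot\|_n)\lesssim D_m\log(\Gamma/\epsilon)$ for these uniformly $O(\Gamma)$‑bounded increments; Dudley's entropy integral then bounds the localized Gaussian complexity at scale $r$ by $\lesssim \sigma r\sqrt{D_m\log n/n}$, and a peeling argument over dyadic scales of $\|\hat f-\bar f\|_n$ together with a Borell--TIS concentration step makes this hold with high probability uniformly in the scale. Inserting $r=\|\hat f-\bar f\|_n\le\|\hat f-f_*\|_n+\|\bar f-f_*\|_n$ and absorbing the term linear in $\|\hat f-f_*\|_n$ via $2ab\le\tfrac12 a^2+2b^2$, one obtains after rearrangement
\[
\|\hat f - f_*\|_n^2 \;\lesssim\; \|\bar f - f_*\|_n^2 \;+\; \sigma^2\,\frac{D_m\log n}{n}
\]
with high probability.

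\emph{From empirical to population norm.} Since $\bar f$ is fixed, $\EE\|\bar f - f_*\|_n^2 = \|\bar f - f_*\|_\mu^2$, and one‑sided concentration controls $\|\bar f - f_*\|_n^2$ by its mean. For the left‑hand side, the squared increments $\{(f-f_*)^2 : f\in\mathcal{F}_m\}$ form a class bounded by $4\Gamma^2$ whose complexity is again controlled by $D_m$, so a one‑sided uniform Bernstein/Talagrand inequality gives $\|f-f_*\|_\mu^2\le 2\|f-f_*\|_n^2 + c\,\Gamma^2 D_m\log n/n$ simultaneously over $f\in\mathcal{F}_m$; apply this with $f=\hat f$. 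Chaining the displays produces $\|\hat f - f_*\|_\mu^2\lesssim \|\bar f - f_*\|_\mu^2 + \max\{\sigma^2,\Gamma^2\}\,D_m\log n/n$ on a high‑probability event, and integrating the complementary tail (of order $\Gamma^2 D_m\log n/n$ by the uniform boundedness) upgrades this to the claimed bound on $\EE\|\hat f - f_*\|_\mu^2$.

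\emph{Main obstacle.} The delicate part is the localization in the second step: one must run the peeling carefully and identify the critical radius at which the local Gaussian complexity $\sigma r\sqrt{D_m\log n/n}$ balances $r^2$ — this is exactly what yields the sharp $D_m\log n/n$ estimation rate rather than a cruder $\sqrt{D_m/n}$ — while simultaneously handling the possibly misspecified case $f_*\notin\mathcal{F}_m$, in which the empirical process must be centered at $\bar f$ rather than $f_*$ and the triangle‑inequality bookkeeping above is what keeps the approximation error $\|\bar f - f_*\|_\mu^2$ entering only with an absolute constant.
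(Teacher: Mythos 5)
The paper does not prove this statement at all --- it is imported verbatim as Theorem 4.2 of \cite{HanWellner2016} --- so there is no in-paper argument to compare against. Your outline (basic inequality from optimality of $\hat{f}^{(n)}_m$, control of the Gaussian cross-term by combining Haussler's covering bound for classes of pseudo-dimension at most $D_m$ with chaining and peeling, then a uniform transfer from the empirical seminorm to $\|\cdot\|_\mu$) is exactly the route taken in that reference and is correct as a proof sketch.
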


The $(m,k)$-spectrahedral estimator \eqref{eq:speclse} is a special case of the estimator $\hat{f}^{(n)}_m$ when $\mathcal{F}$ is the class of convex functions $f:\RR^d\to \RR$ and $\mathcal{F}_{m}$ is the class of $(m,k)$-spectrahedral functions as in Definition \ref{def:specfxn}, denoted by $\mathcal{F}_{m,k}$. 
Since the class is parameterized by $d+1$ matrices in $\mathbb{S}_k^m$, we define for each $m \in \mathbb{N}$ and $k=1, \ldots, m$,
\begin{align}
(\hat{A}_1, \ldots, \hat{A}_d, \hat{B}) \in \mathrm{argmin}_{A_1, \ldots, A_d, B \in \mathbb{S}_k^m} \sum_{j=1}^n \left[y^{(j)} - \lambda_{\max}\left(\sum_{i=1}^d x_i^{(j)}A_i + B\right)\right]^2,
\end{align}
and define the $(m,k)$-spectrahedral estimator of $f_*$ by 
\[\hat{f}_{m,k}(x) := \lambda_{\max}\left(\sum_{i=1}^d x_i\hat{A}_i + \hat{B}\right).\] 

We also define the estimator when $\mathcal{F}$ is the class of support functions of convex bodies (compact and convex subsets) in $\RR^d$, denoted by $\mathcal{K}$, and $\mathcal{F}_m$ is the subclass consisting of positively homogeneous $(m,k)$-spectrahedral functions, or equivalently, support functions of $(m,k)$-spectratopes. This corresponds to the case when the offset matrix $B = 0$.
In this setting, we assume we are given observations $(u^{(1)}, y^{(1)}), \ldots, (u^{(n)}, y^{(n)}) \in \mathbb{S}^{d-1} \times \RR$ satisfying
\[y^{(i)} = h_{K_*}(u^{(i)}) + \ee_i,\]
where 
$h_K(u) := \sup_{x \in K} \langle u,x \rangle$, $u \in \mathbb{S}^{d-1}$, is the support function of a set $K_* \in \mathcal{K}$. 
We denote the class of $(m,k)$-spectratopes, or linear images of $\mathcal{S}_{m,k}$ in $\RR^d$ by $\mathcal{L}(\mathcal{S}_{m,k})$.
To define the $(m,k)$-spectratope estimator, 
let
\[(\hat{A}_1, \ldots, \hat{A}_d) \in \mathrm{argmin}_{A_1, \ldots, A_d,\in \mathbb{S}_k^{m}} \sum_{j=1}^n \left[Y_i - \lambda_{\max}\left(\sum_{i=1}^du^{(j)}_iA_i\right)\right]^2,\]
and define
\[\hat{K}_{m,k} = \{z \in \RR^d : z = (\langle \hat{A}_1, X\rangle, \ldots, \langle \hat{A}_d, X \rangle) \text{ for some } X \in \mathcal{S}_{m,k}\}.\]

For support function estimation, we notate the risk in terms of the convex bodies. 
Letting $\nu$ denote the probability distribution on $\mathbb{S}^{d-1}$ of $u^{(1)}$, we define the risk
\[\ell_{\nu}^2(\hat{K},K) := \int_{\mathbb{S}^{d-1}} (h_{\hat{K}}(u) - h_{K}(u))^2 \dint \nu(u).\]

In the following lemma, we prove an upper bound on the pseudo-dimension of the relevant set \eqref{e:HmX} needed to apply Theorem \ref{t:wellnerriskbound} for the estimators $\hat{f}_{m,k}$ and $\hat{K}_{m,k}$.  

\begin{lemma}\label{l:PdimBound}
For $m, k \in \mathbb{N}$ such that $k$ divides $m$, define for $x^{(1)}, \ldots, x^{(d)} \in \RR^d$,
\begin{align*}
H_{m,k} &:= \bigg\{ z = \left(\lambda_{\max}\left(\mathcal{A}[x^{(1)}] + B\right), \ldots, \lambda_{\max}\left(\mathcal{A}[x^{(n)}]  + B\right)\right) \in \RR^n \\
& \qquad \quad \text{ for some } \mathcal{A} \in (\mathbb{S}^m_k)^d, B \in \mathbb{S}_k^{m}\bigg\},
\end{align*}
and for $u^{(1)}, \ldots, u^{(d)}  \in \mathbb{S}^{d-1}$,
\begin{align*}
\tilde{H}_{m,k} &:= \bigg\{z = \left(\lambda_{\max}\left(\mathcal{A}[u^{(1)}]\right), \ldots, \lambda_{\max}\left(\mathcal{A}[u^{(n)}]\right)\right) \in \RR^n \text{ for some } \mathcal{A} \in (\mathbb{S}^m_k)^d\bigg\},
\end{align*}
Then, there exists absolute constant $c_1, c_2 > 0$ such that
\begin{align*}
\text{Pdim}(H_{m,k}) \leq c_1 km(d+1) \log(c_2 n/k)
\quad
\text{and} 
\quad
\text{Pdim}(\tilde{H}_{m,k}) \leq c_1 km d \log(c_2 n/k).
\end{align*}
\end{lemma}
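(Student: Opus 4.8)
The plan is to bound the pseudo-dimension by counting the number of sign patterns (equivalently, connected components) cut out in parameter space by the relevant inequalities, and then invoke a Warren/Milnor–Thom type bound for real algebraic varieties. Concretely, fix the data $x^{(1)},\dots,x^{(n)}$ (resp. $u^{(1)},\dots,u^{(n)}$) and a candidate shattering witness $h \in \RR^n$. For each $j$, the condition $\lambda_{\max}(\A[x^{(j)}]+B) \lessgtr h_j$ is, because the top eigenvalue satisfies $\lambda_{\max}(M) \le t \iff tI - M \succeq 0$, equivalent to a sign condition on the leading principal minors of $h_j I - (\A[x^{(j)}]+B)$. Since the matrices live in $\mathbb{S}^m_k$, each relevant block is at most $k\times k$, so these minors are polynomials of degree at most $k$ in the $km(d+1)/k \cdot k = \Theta(km(d+1))$ real parameters describing $(\A,B)$ — more precisely, the number of free parameters is $N := (d+1)\cdot (m/k)\cdot\binom{k+1}{2} = \Theta(km(d+1))$ (for $\tilde H_{m,k}$, drop the $+1$, giving $N=\Theta(kmd)$). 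Each of the $n$ data points contributes $O(k)$ such minor polynomials, each of degree $\le k$.

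The next step is to apply the standard bound (Warren's theorem / the Milnor–Thom bound, as used e.g.\ in Pollard's or Anthony–Bartlett's treatment of pseudo-dimension for parametric classes): a family of $M$ real polynomials of degree $\le \Delta$ in $N$ variables partitions $\RR^N$ into at most $(C M\Delta/N)^N$ sign-pattern cells for an absolute constant $C$ (valid when $M \ge N$). Here $M = O(kn)$ and $\Delta = O(k)$, so the number of realizable sign patterns of the $n$ comparisons $\{z_j \lessgtr h_j\}$ is at most $(C' k^2 n / N)^N$. If a set $\sigma$ of size $s$ is shattered, then all $2^s$ sign patterns on $\sigma$ must be realizable, so $2^s \le (C' k^2 n/N)^N$. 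Taking logarithms and using $N = \Theta(km(d+1))$ (resp.\ $\Theta(kmd)$), and simplifying $\log(k^2 n / (km(d+1))) = O(\log(n/k))$ after absorbing the $m(d+1)$ factor appropriately, yields $s = O\big(km(d+1)\log(c_2 n/k)\big)$, which is the claimed bound; the second bound follows identically with $d+1$ replaced by $d$.

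A couple of technical points need care. First, the characterization $\lambda_{\max}(M)\le t$ via nonnegativity of principal minors of $tI-M$ is only an iff statement when stated correctly (nonnegativity of all leading principal minors characterizes positive semidefiniteness only with some care about the boundary); it is cleaner to use that $\lambda_{\max}(M) < t$ iff $tI - M \succ 0$ iff all $k$ leading principal minors of each block of $tI-M$ are strictly positive, and to handle the non-strict comparisons in the pseudo-dimension definition by a limiting/perturbation argument, or simply to note that pseudo-dimension is insensitive to whether one uses strict or non-strict inequalities since the witness $h$ can be perturbed generically. Second, one must confirm the free-parameter count: $\mathbb{S}^m_k$ has dimension $(m/k)\binom{k+1}{2} \le km$, so $(\mathbb{S}^m_k)^d \times \mathbb{S}^m_k$ has dimension $\le km(d+1)$, matching the target up to the absolute constant $c_1$.

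The main obstacle I expect is bookkeeping the degree and variable count through the Milnor–Thom bound so that the $\log$ comes out as $\log(c_2 n/k)$ rather than, say, $\log(c_2 n)$ or $\log(c_2 n d/k)$ — i.e.\ verifying that the $m$, $d$ factors inside the logarithm really can be absorbed into the leading $km(d+1)$ factor and an absolute constant. This is the kind of elementary-but-fiddly estimate (of the form $N\log(AM\Delta/N) \le c_1 N \log(c_2 n/k)$) that is routine in principle but where the stated dependence on $k$ in particular must be tracked honestly through the block structure.
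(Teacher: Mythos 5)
Your proposal is correct and follows essentially the same route as the paper: reduce the pseudo-dimension to counting sign patterns of principal-minor polynomials of the pencils in the $\Theta(km(d+1))$-dimensional parameter space (resp.\ $\Theta(kmd)$ without the offset) and invoke a Warren/Milnor--Thom-type bound, exactly as the paper does via Proposition~\ref{p:distinctsigns}. The only differences are cosmetic --- you apply the sign-pattern bound directly to all the block minors (degree $\le k$) where the paper first partitions parameter space into regions on which the minimum of the determinants is a single polynomial of degree $\le m$ --- and your care with Sylvester's criterion (strict positivity of leading principal minors characterizes positive definiteness, with a generic perturbation of the witness $h$ to handle the boundary) is in fact the right way to patch a point the paper states loosely with non-strict inequalities.
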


To prove the lemma, we need the following known result (see for instance, Lemma 2.1 in \cite{BartlettMaiorovMeir1998}):

\begin{proposition}\label{p:distinctsigns}
Let $p_1, \ldots, p_n$ be fixed polynomials of degree at most $m$ in $D$ variables for $D \leq m$. The number of distinct sign vectors $(\text{sgn}(p_1(A), \ldots, \text{sgn}(p_n(A)))$ that can be obtained by varying $A \in \RR^D$ is at most $2\left(\frac{2e n m}{D}\right)^D$.
\end{proposition}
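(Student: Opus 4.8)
The plan is to interpret the quantity to be bounded geometrically and reduce it to a classical bound on the number of connected components of a real semialgebraic set. Regarding each realizable sign vector $s \in \{-1,0,+1\}^n$ as a \emph{sign condition} of the family $p_1,\dots,p_n$, the claim is a bound on the number of sign conditions realized as $A$ ranges over $\RR^D$. First I would reduce to counting \emph{strict} sign conditions (entries in $\{-1,+1\}$). A sign vector with zero entries on an index set $I$ is realized on $\bigcap_{i\in I}\{p_i=0\}$, itself a variety cut out by polynomials of degree $\le m$; by a standard perturbation (replacing $p_i$ by $p_i \pm \delta$ and letting $\delta \to 0$) the number of sign conditions involving zeros is controlled by the number of strict sign conditions of a slightly enlarged family, at the cost of an overall constant factor, which is where the leading factor of $2$ ultimately originates.

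Second, the heart of the argument is to bound the number of strict sign conditions, equivalently the number of connected components of $\RR^D \setminus \bigcup_{i=1}^n\{p_i=0\}$ on which $(\mathrm{sgn}(p_1),\dots,\mathrm{sgn}(p_n))$ is constant and nowhere zero. I would obtain this via the critical-point (Morse-theoretic) method underlying Warren's theorem: after a generic infinitesimal perturbation making the hypersurfaces smooth and in general position, every such component, intersected with a large ball and equipped with a generic linear functional, contains a critical point of that functional on an appropriate intersection of at most $D$ of the hypersurfaces. Counting these critical points reduces to counting common zeros of a polynomial system whose degrees are controlled by $m$, so Bézout's theorem yields, for each choice of $j \le D$ hypersurfaces, at most order $(2m)^j$ critical points. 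Summing over the $\binom{n}{j}$ choices gives a bound of the form $\sum_{j=0}^{D}\binom{n}{j}(2m)^j$.

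Finally I would simplify this combinatorial sum into the stated closed form. Using the elementary estimate $\binom{n}{j}\le (en/j)^j$ and the fact that, in the stated regime ($D \le m$, with $n$ large as in the application), the summand is increasing in $j$ so the sum is dominated by its top term up to a constant, I would bound $\sum_{j=0}^{D}\binom{n}{j}(2m)^j$ by $\big(2enm/D\big)^{D}$ after absorbing constants; together with the factor $2$ from the reduction to strict signs this gives the claimed $2(2enm/D)^D$.

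The step I expect to be the main obstacle is the perturbation/general-position argument, on two fronts. First, one must verify that passing to the perturbed family cannot \emph{decrease} the number of realized sign conditions (so the perturbed count is a valid upper bound) while keeping the degree and variable count under control; this requires the standard but delicate semialgebraic limiting reasoning. Second, correctly accounting for the zero entries of the $\{-1,0,+1\}$ sign vectors, as opposed to the purely strict $\{-1,+1\}$ case handled by the cleanest form of Warren's bound, is precisely what forces the extra factor and the careful bookkeeping over which subsets of the $p_i$ vanish. Alternatively, since this is a well-known estimate, one may simply invoke the sign-condition bounds of Warren or of Milnor--Thom--Oleinik--Petrovsky type and pass directly to the combinatorial simplification.
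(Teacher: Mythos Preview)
The paper does not prove this proposition at all: it is stated as a known result and attributed to the literature (``see for instance, Lemma~2.1 in \cite{BartlettMaiorovMeir1998}''), and is then applied as a black box in the proof of Lemma~\ref{l:PdimBound}. So there is nothing in the paper to compare your argument against.

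Your sketch is, in outline, the standard route to this bound: reduce to strict sign conditions, count connected components of the complement of the union of the zero sets via a Morse/critical-point argument together with B\'ezout, and then simplify the resulting sum $\sum_{j=0}^{D}\binom{n}{j}(2m)^j$ using $\binom{n}{j}\le (en/j)^j$. This is essentially the proof of Warren's theorem (and its repackaging in Bartlett--Maiorov--Meir), and it is a correct approach. One small correction: the leading factor $2$ in the stated bound is not really the cost of handling the zero entries of the sign vector; it already appears in Warren's bound for \emph{strict} sign patterns (it comes out of the B\'ezout/critical-point count before simplification). The passage from strict to $\{-1,0,+1\}$-valued sign vectors is typically handled by the perturbation $p_i\mapsto p_i\pm\delta$ you mention and is absorbed into the same constants rather than producing an additional multiplicative $2$. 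Also note that the number of realized sign vectors is at most the number of connected components of the complement, but two components can share a sign vector, so the inequality goes the right way and your component count is indeed an upper bound.

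In short: your proposal is correct and more detailed than the paper, which simply cites the result. If you want to match the paper exactly, a one-line citation to Warren's theorem (or to Lemma~2.1 of Bartlett--Maiorov--Meir) suffices.
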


\begin{proof}(of Lemma \ref{l:PdimBound})
Assume that the pseudo-dimension of $H_{m,k} \subset \RR^n$ 
is $\rho$. By the definition of pseudo-dimension, the size of the collection of sign vectors
\[\mathcal{G}_{m,k} := \{(\text{sgn}(\lambda_{\max}(\mathcal{A}[x^{(1)}] + B), \ldots, \text{sgn}(\lambda_{\max}(\mathcal{A}[x^{(n)}] + B))) : \mathcal{A} \in (\mathbb{S}_k^{m})^{d}, B \in \mathbb{S}_k^m\}\]
must be at most $2^{\rho}$. For each $i$, \[\text{sgn}(\mathcal{A}[x^{(i)}] + B) = \text{sgn}(\min\{p_1(\mathcal{A},B;x^{(i)}), \ldots, p_m(\mathcal{A}, B;x^{(i)})\}),\]
where $p_{\ell}(\A, b; x^{(i)}) = \det(-(\mathcal{A}[x^{(i)}] + B)_{\ell:\ell})$ is the determinant of the $\ell \times \ell$ principal submatrix of $-\mathcal{A}[x^{(i)}]- B$. Indeed, $\lambda_{\max}(\mathcal{A}[x^{(i)}]+ B) \leq 0$ 
if and only if all of these determinants are non-negative. Thus, the size of $\mathcal{G}_{m,k}$ is the same size of
\[\mathcal{I}_{m,k} := \{ (\text{sgn}(p(\mathcal{A}, B; x^{(1)})), \ldots, \text{sgn}(p(\mathcal{A}, B;x^{(n)}))) : \mathcal{A} \in (\mathbb{S}_k^{m})^{d+1}\}),\]
where for each $i$, $p(\mathcal{A},B;x^{(i)}) := \min\{p_1(\mathcal{A}, B;x^{(i)}), \ldots, p_m(\mathcal{A}, B; x^{(i)})\}$ is a piecewise polynomial in $\mathcal{A}$. To bound the size of $\mathcal{I}_{m,k}$, we use the idea from \cite{BartlettMaiorovMeir1998}. We can partition $(\mathbb{S}_k^m)^{d+1}$ into at most $mn$ regions over which the vector is coordinate-wise a fixed polynomial. Then we apply Proposition \ref{p:distinctsigns}.

We have $n$ polynomials of degree at most $m$ in up to $D = (d+1) k m$ variables, i.e. the number of degrees of freedom of $d+1$ $m \times m$ $k$-block matrices. 
Thus, the number of distinct sign vectors in $\mathcal{I}_m$ satisfies $|\mathcal{I}_m| \leq 2mn \left(\frac{2en}{(d+1)k}\right)^{(d+1)km}$. This implies that $2^{\rho} \leq 2mn \left(\frac{2en}{(d+1)k}\right)^{(d+1)km}$, and hence
\begin{align*}
\rho \leq \frac{(d+1)km}{\log 2} \log\left(\frac{2en}{(d+1)k}\right) + \frac{\log(2mn)}{\log 2} \leq c_1 km(d+1) \log\left(\frac{c_2 n}{k}\right).
\end{align*}
The second claim follows similarly, where instead $D = dkm$.
\end{proof}

We can now obtain an upper bound on the risk of the estimators $\hat{f}_{m,k}$ and $\hat{K}_{m,k}$. Recall that 
we assume $f_*$ and functions in $\mathcal{F}_{m,k}$ are uniformly bounded by some $\Gamma \in (0,\infty)$, and for support function estimation we assume $K_*$ and elements of $\mathcal{L}(\mathcal{S}_{m,k})$ are contained in $B_d(0, \Gamma)$.

\begin{theorem}\label{t:CRrate}
\begin{itemize}
    \item[(i)] For any convex function $f_*: \RR^d \to \RR$, there exist absolute constants $c$ and $b$ such that
\[\|\hat{f}_{m,k} - f_*\|_{\mu}^2 \leq c \left(\inf_{f \in \mathcal{F}_{m,k}}\|f - f_*\|^2_{\mu} + \max\{\sigma^2, \Gamma^2\}k m(d+1) \frac{\log(b n/k)}{n}\right).\]
\item[(ii)]For any convex body $K_*$ in $\RR^d$,
\[\ell_{\nu}^2(\hat{K}_{m,k},K_*) 
\leq c \left(\inf_{S \in \mathcal{L}(\mathcal{S}_{m,k})} \ell^2_{\nu}(S,K_*) + \max\{\sigma^2, \Gamma^2\}\frac{mk}{n}d\log(b n)\right)\]
\end{itemize}
\end{theorem}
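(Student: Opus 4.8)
Both parts of Theorem~\ref{t:CRrate} follow by specializing the general risk bound of Theorem~\ref{t:wellnerriskbound} to the two function classes at hand and inserting the pseudo-dimension estimates of Lemma~\ref{l:PdimBound}; the plan is therefore one of checking hypotheses and translating notation, with no genuinely new argument required.

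For part (i) I would take $\mathcal{F}$ to be the class of convex functions $\RR^d\to\RR$, $\mathcal{F}_m = \mathcal{F}_{m,k}$ the $(m,k)$-spectrahedral functions, and $\mu$ the covariate distribution, so that \eqref{eq:speclse} is exactly the estimator $\hat f^{(n)}_m$ of the general framework and the complexity set \eqref{e:HmX} is $H_{m,k}$. Lemma~\ref{l:PdimBound} supplies $D_{m,k} := c_1 km(d+1)\log(c_2 n/k)$ with $\mathrm{Pdim}(H_{m,k})\le D_{m,k}$; this is $\ge 1$ in the regime of interest (and can be forced by enlarging $c_1$), $n\ge 7$ is assumed, and $f_*$ together with $\mathcal{F}_{m,k}$ are uniformly bounded by $\Gamma$ by hypothesis. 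Theorem~\ref{t:wellnerriskbound} then yields $\|\hat f_{m,k}-f_*\|_\mu^2 \le c\big(\inf_{f\in\mathcal{F}_{m,k}}\|f-f_*\|_\mu^2 + \max\{\sigma^2,\Gamma^2\}D_{m,k}\log n/n\big)$, and substituting $D_{m,k}$ and collapsing the logarithmic factors (up to the absolute constant $b$) gives the stated bound. One small loose end is that the $\mathrm{argmin}$ defining $\hat f_{m,k}$ need not be attained over the non-compact parameter space; one either assumes attainment, as is customary, or runs the argument with an approximate minimizer, which the proof of Theorem~\ref{t:wellnerriskbound} accommodates.

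For part (ii) I would instead take $\mathcal{F}=\mathcal{K}$, the support functions of convex bodies, with covariates in $\mathbb{S}^{d-1}$ distributed as $\nu$ and $\mathcal{F}_m$ the support functions of $(m,k)$-spectratopes (the case $B=0$). Two translations are needed. First, the uniform boundedness hypothesis: since $K_*\subseteq B_d(0,\Gamma)$ and every $S\in\mathcal{L}(\mathcal{S}_{m,k})$ lies in $B_d(0,\Gamma)$, Cauchy--Schwarz gives $|h_{K_*}(u)|,|h_S(u)|\le\Gamma$ for all $u\in\mathbb{S}^{d-1}$, so $\Gamma$ is a valid uniform bound. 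Second, by definition $\ell_\nu^2(\hat K_{m,k},K_*)=\|h_{\hat K_{m,k}}-h_{K_*}\|_\nu^2$ and $\inf_{S\in\mathcal{L}(\mathcal{S}_{m,k})}\ell_\nu^2(S,K_*)=\inf_{f\in\mathcal{F}_m}\|f-h_{K_*}\|_\nu^2$, so the risk and approximation terms match those in Theorem~\ref{t:wellnerriskbound}. The relevant complexity set is now $\tilde H_{m,k}$, for which Lemma~\ref{l:PdimBound} gives the slightly smaller bound $c_1 kmd\log(c_2 n/k)$ (only $d$ matrices, hence $d$ in place of $d+1$); feeding this into Theorem~\ref{t:wellnerriskbound} and simplifying the logarithms as before produces part (ii).

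I do not expect a real obstacle here: the substantive work was the sign-counting pseudo-dimension estimate of Lemma~\ref{l:PdimBound}, and Theorem~\ref{t:CRrate} is its packaging through a black-box risk bound. The one point requiring mild care is keeping the logarithmic factors consistent: the pseudo-dimension bound already carries a $\log(c_2 n/k)$ and Theorem~\ref{t:wellnerriskbound} contributes a further $\log n$, so strictly one obtains a product of two logarithms, which is absorbed into the single $\log(bn/k)$ (resp. $\log(bn)$) displayed in the statement by allowing $b$ to be an absolute constant; retaining an honest $\log^2$ factor instead would not affect the comparison with the minimax lower bounds that motivates this bound.
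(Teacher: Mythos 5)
Your proposal is correct and follows exactly the paper's route: the paper's entire proof of Theorem \ref{t:CRrate} is the one-line observation that it follows from Theorem \ref{t:wellnerriskbound} combined with Lemma \ref{l:PdimBound}. You are in fact more careful than the paper on the one delicate point -- the product of $\log(c_2 n/k)$ from the pseudo-dimension bound with the $\log n$ from Theorem \ref{t:wellnerriskbound} genuinely yields a $\log^2$ factor that cannot literally be absorbed into a single $\log(bn/k)$ by adjusting $b$, and your remark that carrying the honest $\log^2$ does not affect any downstream conclusion is the right way to reconcile this.
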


\begin{proof}
This result follows from Theorem \ref{t:wellnerriskbound} and Lemma \ref{l:PdimBound}.
\end{proof}

\subsection{Minimax Rates} 

The minimax risk for estimating a function in the class $\mathcal{F}$ from $\{x^{(i)}, y^{(i)}\}_{i=1}^n$ in the random design setting is defined by
\[R_{\mu}(n, \mathcal{F}) := \min_{\hat{f}} \max_{f \in \mathcal{F}}  \|\hat{f} - f\|_{\mu}.\]

In Table \ref{table:minimax} we summarize known rates as $n \to \infty$ of this minimax risk for certain sub-classes of convex functions. 
First consider the class $\mathcal{F}_{m,k}(\Omega)$ of functions in $\mathcal{F}_{m,k}$ with compact and convex domain $\Omega \subset \RR^d$. 
In this case, the approximation error in the risk bound is zero and the rate of convergence is $O\left(\frac{\log n}{n}\right)$. This is the best rate we can achieve when the domain $\Omega$ satisfies a certain smoothness assumption (see \cite[Theorem 2.6]{HanWellner2016}) 
and also appealing to the fact that $\mathcal{F}_{m,1} \subseteq \mathcal{F}_{m,k}$. 
Otherwise, the best lower bound we have is $O\left(\frac{1}{n}\right)$ using standard arguments for parametric estimation. 

Additionally we consider two non-parametric sub-classes of convex functions. First is Lipschitz convex regression, where we assume the true function $f_*$ belongs to the class $\mathcal{C}_L(\Omega)$ of $L$-Lipschitz convex functions with convex and compact full-dimensional support $\Omega \subset \RR^d$.
Second is support function estimation, where we assume the true function is the support function of a set $K$ belonging to the collection $\mathcal{K}(\Gamma)$ of convex and compact subsets of $\RR^d$ contained in the ball $B_d(0, \Gamma)$ for some finite $\Gamma > 0$. In both settings, the usual LSE over the whole class is minimax sub-optimal \cite{KurConvexRegression, KurSupport}, necessitating a regularized LSE to obtain the minimax rate.


\begin{table}[h!]
  \caption{Minimax Rates for Sub-Classes of Convex Functions}
  \centering 
    \begin{tabular}{c || c | c | c | c}
    $\mathcal{F}$  & $\mathcal{F}_{m,k}(\Omega)$, for $\Omega$ smooth \cite{HanWellner2016} & $\mathcal{C}_L(\Omega)$  \cite{Balazs2015}  
    &  $\mathcal{K}(\Gamma)$  \cite{Guntuboyina2012}\\
     \midrule 
 $R_{\mu}(n, \mathcal{F})$  \,  &  $\frac{\log n}{n}$ 
 & $n^{-\frac{4}{d+4}}$ 
 & $n^{-\frac{4}{d+3}}$
 
    \end{tabular}
\label{table:minimax}
  \end{table}

\subsection{Approximation Rates}

For Lipschitz convex regression, Lemma 4.1 in \cite{Balazs2015} implies the following: for $f_* \in \mathcal{C}_L(\Omega)$,
\begin{align}\label{e:CRapprox}
\inf_{f \in \mathcal{F}_{m,1}}\|f - f_*\|_{\mu} \leq  \inf_{f \in \mathcal{F}_{m,1}}\|f - f_*\|_{\infty} \leq c_{d, \Omega, L} m^{-2/d}.
\end{align}
For support function estimation, let $d_H(S,K) := \|h_S - h_K\|_{\infty}$ denote the Hausdorff distance between any $S$ and $K$ in $\mathcal{K}$. A classical result of Bronshtein (see Section 4.1 in \cite{Bronshtein2008}) implies that 
\begin{align}\label{e:SFapprox}
  \inf_{S \in \mathcal{L}(\mathcal{S}_{m,1})} \ell_{\nu}(S,K) \leq \inf_{\mathcal{L}(\mathcal{S}_{m,1})} d_H(S,K) \leq c_{d,\Gamma}m^{-2/(d-1)},  
\end{align}
This result is also the core of the proof of \eqref{e:CRapprox}.

We first show that inserting \eqref{e:CRapprox} and \eqref{e:SFapprox} into Theorem \ref{t:CRrate} and optimizing over $m$ gives general upper bounds on the risk for our $(m,k)$-spectrahedral estimators. These rates match the minimax rate up to logarithmic factors for fixed $k > 0$, 
and even when $k$ is allowed to depend logarithmically on $m$. 


\begin{corollary}\label{t:rates}
Suppose $k_m = f(m)$ for a non-decreasing and differentiable function $f: \RR \to (0, m]$. 
\begin{itemize}
    \item[(a)] (Lipschitz convex regression) Suppose $f_* \in \mathcal{C}_L(\Omega)$ and define the function
\[g(m) := f'(m) m^{\frac{2d+ 4}{d}} + f(m) m^{\frac{d + 4}{d}}.\]
Then, 
for $\alpha_n = g^{-1}\left(\frac{2n}{d(d+1) \max\{\sigma^2,\Gamma^2\} \log(bn)}\right)$,
\begin{align}\label{e:rate1}
\inf_{m \geq 1} \|\hat{f}_{m,k_m} - f_*\|^2_{\mu} \leq c_{d, \Omega, \Gamma}\left(\alpha_n^{-4/d} + \max\{\sigma^2,\Gamma^2\}(d+1) \alpha_n f(\alpha_n) \frac{\log(bn)}{n}\right),
\end{align}
\item[(b)] (Support function estimation) Suppose $K_* \in \mathcal{K}(\Gamma)$ and define the function
\[g(m) := f'(m) m^{\frac{2(d+1)}{d-1}} + f(m) m^{\frac{d+3}{d-1}}.\]
Then, 
for $\alpha_n = g^{-1}\left(\frac{2n}{(d-1)d\max\{\sigma^2,\Gamma^2\} \log(bn)}\right)$,
\begin{align}
\inf_{m \geq 1} \ell^2_{\nu}(\hat{K}_{m,k_m},K_*) \leq c_{d, \Gamma}\left(\alpha_n^{-\frac{4}{d-1}} + \max\{\sigma^2,\Gamma^2\}(d+1) \alpha_n f(\alpha_n) \frac{\log(bn)}{n}\right).
\end{align}
\end{itemize}
\end{corollary}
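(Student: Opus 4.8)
The plan is to combine the risk bound of Theorem~\ref{t:CRrate} with the deterministic approximation estimates \eqref{e:CRapprox} and \eqref{e:SFapprox}, and then optimize the resulting expression over the free parameter $m$, where $k$ is tied to $m$ via $k_m = f(m)$. I will treat part (a) in detail; part (b) is analogous with the exponent $2/d$ replaced by $2/(d-1)$ and the pseudo-dimension factor $km(d+1)$ replaced by $kmd$ (which only changes constants and the precise form of $g$).

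First I would substitute \eqref{e:CRapprox} into Theorem~\ref{t:CRrate}(i): since $\mathcal{F}_{m,1}\subseteq\mathcal{F}_{m,k_m}$, the approximation term $\inf_{f\in\mathcal{F}_{m,k_m}}\|f-f_*\|_\mu^2$ is at most $c_{d,\Omega,L}^2\, m^{-4/d}$. This yields, for each $m\ge 1$,
\begin{equation*}
\|\hat{f}_{m,k_m}-f_*\|_\mu^2 \;\le\; c_{d,\Omega,L,\Gamma}\left(m^{-4/d} + \max\{\sigma^2,\Gamma^2\}\,k_m\, m\,(d+1)\,\frac{\log(bn/k_m)}{n}\right).
\end{equation*}
Since $1\le k_m\le m$, we may absorb the $\log(bn/k_m)$ into $\log(bn)$ (up to constants). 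The right-hand side is now an explicit function of $m$ that we must minimize; the first term decreases in $m$ and the second term increases in $m$, so the optimum balances the two.

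The heart of the argument is the balancing step. Setting the derivative (in $m$) of the bound to zero, and writing $k_m = f(m)$ so that $\frac{d}{dm}\big(f(m) m\big) = f'(m)m + f(m)$, the estimation term behaves like $\tfrac{d+1}{n}\max\{\sigma^2,\Gamma^2\}\log(bn)\,\big(f'(m)m + f(m)\big)$ after differentiation, while the approximation term contributes $-\tfrac{4}{d} m^{-4/d-1}$. Equating magnitudes and multiplying through by $m^{4/d+1}$ gives, up to the universal constant $\tfrac{4}{d}$ absorbed into the definition, the condition
\begin{equation*}
f'(m)\, m^{\frac{2d+4}{d}} + f(m)\, m^{\frac{d+4}{d}} \;=\; \frac{2n}{d(d+1)\max\{\sigma^2,\Gamma^2\}\log(bn)},
\end{equation*}
which is exactly $g(m) = \tfrac{2n}{d(d+1)\max\{\sigma^2,\Gamma^2\}\log(bn)}$. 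Because $f$ is non-decreasing and differentiable, $g$ is continuous and non-decreasing, so it has an inverse on its range, and $\alpha_n := g^{-1}\big(\tfrac{2n}{d(d+1)\max\{\sigma^2,\Gamma^2\}\log(bn)}\big)$ is well-defined for $n$ large. Plugging $m=\alpha_n$ back into the two terms of the risk bound: the approximation term is $\alpha_n^{-4/d}$, and the estimation term, using $k_{\alpha_n}=f(\alpha_n)$, is $\max\{\sigma^2,\Gamma^2\}(d+1)\alpha_n f(\alpha_n)\tfrac{\log(bn)}{n}$ up to constants — which is precisely the claimed bound \eqref{e:rate1}. Finally, since $\alpha_n$ need not be an integer, I would round to $\lceil\alpha_n\rceil$ or $\lfloor\alpha_n\rfloor$ and note that the bound changes only by an absolute constant (both terms are polynomially controlled, and $f$ is monotone), and then take the infimum over all integer $m\ge 1$ to conclude.

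The main obstacle — really the only subtlety beyond bookkeeping — is making the calculus step rigorous: the bound we are optimizing is a discrete function of the integer $m$ with $k_m$ coupled to $m$ through a general $f$, so strictly speaking one does not differentiate but rather exhibits $\alpha_n$ (defined via $g^{-1}$) as an approximate minimizer and verifies the two-term bound at that value directly. One must also check that $g$ is genuinely invertible on the relevant range — this uses monotonicity of $f$ and $f'\ge 0$ — and that the constraint $f(m)\le m$ (equivalently $k_m\le m$, required for $k_m$ to divide into a valid block structure, and for the pseudo-dimension bound to be stated correctly) is respected at $m=\alpha_n$ for $n$ large; this holds since $f$ maps into $(0,m]$ by hypothesis. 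None of these steps is deep, so the corollary follows essentially by assembling Theorem~\ref{t:CRrate}, \eqref{e:CRapprox}/\eqref{e:SFapprox}, and elementary optimization.
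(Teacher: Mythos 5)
Your proposal is correct and follows essentially the same route as the paper: substitute the approximation bound \eqref{e:CRapprox} (via $\mathcal{F}_{m,1}\subseteq\mathcal{F}_{m,k_m}$) into Theorem~\ref{t:CRrate}, set the derivative in $m$ to zero to obtain the balancing equation $g(m_\star)=\mathrm{const}\cdot n/\log(bn)$, and evaluate the two-term bound at $\alpha_n=g^{-1}(\cdot)$. Your additional remarks on integrality of $m$, invertibility of $g$, and absorbing $\log(bn/k_m)$ into $\log(bn)$ are details the paper leaves implicit, and the factor-of-two difference in the constant inside $g^{-1}$ is immaterial since any admissible $m$ upper-bounds the infimum.
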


We now provide two specific examples for particular functions $f$:
\begin{itemize}
\item[(i)] If $f(m) = km^r$ for fixed $k > 0$ and $r \in [0,1]$, then
\[\inf_{m \geq 1} \|\hat{f}_{m,k_m} - f_*\|_{\mu}^2 \leq O\left(n^{-\frac{4}{(r+1)d + 4}}\log(bn)^{\frac{4}{(r+1)d + 4}}\right),\]
and
\[\inf_{m \geq 1} \ell_{\nu}^2(\hat{K}_{m,k_m},K_*) \leq O\left( n^{-\frac{4}{(r+1)(d-1) + 4}}\log(bn)^{\frac{4}{(r+1)(d-1)+ 4}}\right).\]
\item[(ii)] If $f(m) = \log m$, then $\alpha_n = O\left(n^{\frac{d}{d+4}}\log(n)^{-\frac{2d}{d+4}}\right)$, and
\[\inf_{m \geq 1} \|\hat{f}_{m,k_m} - f_*\|_{\mu}^2 \leq O\left( n^{-\frac{4}{d+4}}\log(n)^{\frac{8}{d+4}}\right),\]
and
\[\inf_{m \geq 1} \ell_{\nu}^2(\hat{K}_{m,k_m},K_*) \leq O\left(n^{-\frac{4}{d+3}}\log(n)^{\frac{8}{d+3}}\right).\]
Indeedn, the inverse of $h(x) := x^a \log(x)$ is $h^{-1}(x) = \left(\frac{ax}{W(ax)}\right)^{1/a}$, where $W$ is the Lambert W function. The bound then follows from the fact that $W$ satisfies $\log W(x) = \log x - W(x)$ and as $x \to \infty$, $W(x) \sim \log(x)$.
\end{itemize}
 
\begin{remark}
For the case $k=1$, Corollary \ref{t:rates} recovers the results in \cite{Guntuboyina2012} and \cite{HanWellner2016} showing these estimators obtain the minimax rate (up to logarithmic factors) for the relevant class of functions. 
\end{remark}

\begin{proof}
We prove equation \eqref{e:rate1}, and the second statement follows by a similar argument. By Theorem \ref{t:rates} and \eqref{e:CRapprox},  
\[ \|\hat{f}_{m,k_m} - f_*\|^2_{\mu} \leq c_{d, \Omega, L} \left(m^{-4/d} + \frac{\max\{\sigma^2, \Gamma^2\} (d+1) f(m)m}{n}\log(b n)\right).\]
The $m_{\star}$ that minimizes the expression in the parentheses above satisfies
\begin{align*}
    0 &= -\frac{4}{d}(m_{\star})^{-\frac{4}{d}-1} + \frac{\max\{\sigma^2, \Gamma^2\} (d+1)\log(bn)}{n}\left(f'(m_{\star})m_{\star} + f(m_{\star})\right),
\end{align*}
or equivalently,
\begin{align*}
    \frac{4n}{d(d+1)\max\{\sigma^2, \Gamma^2\}\log(bn)} &= f'(m_{\star})m_{\star}^{\frac{2d + 4}{d}} + f(m_{\star})m_{\star}^{\frac{d+4}{d}} = g(m_{\star}).
\end{align*}
Then, $m_{\star} = g^{-1}\left(\frac{4n}{d(d+1)\max\{\sigma^2, \Gamma^2\}\log(bn)}\right)$ and plugging back into the upper bound gives the result.
\end{proof}

As stated previously, an important observation from Corollary \ref{t:rates} is that when $k_m = k$ is a fixed constant that does not depend on $m$, the risk bounds for an optimal choice $m_{\star}$ match (up to logarithmic factors) the minimax lower bounds of the classes $\mathcal{C}_L(\Omega)$ and $\mathcal{C}(\Gamma)$. This indicates that the approximation error for the classes $\mathcal{F}_{m,k}$ and $\mathcal{S}_{m,k}$ for fixed $k$ cannot be improved from what was used in the proof. Indeed, this statistical risk analysis provides the following main result of this section: approximation rate lower bounds for the parametric classes $\mathcal{F}_{m,k}$ and $\mathcal{S}_{m,k}$. 

\begin{theorem}\label{t:approx}
Suppose there exists an absolute constant $c > 0$ and $t \in [0,1]$ such that $k_m \leq c m^{t}$ for all $m$ large enough. Let $f_* \in \mathcal{C}_L(\Omega)$. 
For all $\ee > 0$, for all $m$ large enough,
\[ \inf_{f \in \mathcal{F}_{m,k_m}} \|f - f_*\|_{\infty} \geq c_{d, L, \Omega} m^{-2(1 + t)/d - \ee}.\]
Also, let $K_* \in \mathcal{K}(\Gamma)$. For all $\ee > 0$, for all $m$ large enough,
\[\inf_{S \in \mathcal{S}_{m,k_m}} d_H(S,K_*) \geq c_{d, \Gamma} m^{-2(1 + t)/(d-1) - \ee}.\] 
\end{theorem}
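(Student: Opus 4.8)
The plan is to run the standard ``statistical lower bound implies approximation lower bound'' transfer: the risk bound of Theorem~\ref{t:CRrate} decomposes as (squared) approximation error plus estimation error, and if the approximation error decayed strictly faster than the claimed rate, then optimizing that bound over $m$ exactly as in the proof of Corollary~\ref{t:rates} would produce an estimator whose risk beats the minimax lower bounds recorded in Table~\ref{table:minimax}, which is impossible. Concretely, suppose toward a contradiction that the first assertion fails. Negating it supplies a fixed $\ee_0 > 0$, a worst-case $f_* \in \mathcal{C}_L(\Omega)$ (one should really work against $\sup_{f_* \in \mathcal{C}_L(\Omega)} \inf_{f}\|f - f_*\|_{\infty}$ so that the improved bound applies to the functions witnessing the minimax lower bound), and an increasing sequence $m_j \to \infty$ with $\inf_{f \in \mathcal{F}_{m_j,k_{m_j}}} \|f - f_*\|_{\infty} \le m_j^{-2(1+t)/d - \ee_0}$.

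The next step is to feed this into Theorem~\ref{t:CRrate}(i). Bounding $\inf_f \|f - f_*\|_\mu \le \inf_f \|f - f_*\|_\infty$ and using the hypothesis $k_m \le c m^t$ in the estimation term $\max\{\sigma^2,\Gamma^2\}\,k_m m (d+1)\tfrac{\log(bn/k_m)}{n}$, the risk of $\hat f_{m_j,k_{m_j}}$ is at most a constant times $m_j^{-4(1+t)/d - 2\ee_0} + \max\{\sigma^2,\Gamma^2\}\, (d+1)\, m_j^{1+t}\,\tfrac{\log(bn/k_{m_j})}{n}$. For each $j$ I would pick the sample size $n_j$ making $m_j$ the (approximate) minimizer of this two-term expression, i.e.\ $n_j \asymp m_j^{\,4(1+t)/d + 2\ee_0 + 1 + t}$ up to logarithmic corrections, mirroring the computation of $m_\star$ in Corollary~\ref{t:rates}. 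Substituting back gives a risk at sample size $n_j$ of order $n_j^{-\gamma}$ (times a power of $\log n_j$) with $\gamma = \frac{4(1+t)/d + 2\ee_0}{4(1+t)/d + 2\ee_0 + (1+t)}$. Since $\ee_0 > 0$ and $\gamma$ is increasing in $\ee_0$, and since $\gamma = 4/(d+4)$ when $\ee_0 = 0$, we have $\gamma > 4/(d+4)$ strictly.

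Finally, $\hat f_{m_j,k_{m_j}}$ is a legitimate estimator, so its worst-case $\mu$-risk over $\mathcal{C}_L(\Omega)$ is at least $R_\mu(n_j,\mathcal{C}_L(\Omega)) \gtrsim n_j^{-4/(d+4)}$ by the minimax lower bound of \cite{Balazs2015}. For $j$ large this contradicts the $n_j^{-\gamma}$ upper bound just derived, because $\gamma > 4/(d+4)$ swamps the subpolynomial $\log n_j$ factors. The second assertion follows by the identical argument with Theorem~\ref{t:CRrate}(ii) in place of (i): replace the polyhedral approximation exponent $2/(d-1)$ from Bronshtein by the hypothetical $2(1+t)/(d-1) + \ee_0$, use the estimation term $mk\,d\log(bn)/n$, and compare against the minimax risk for $\mathcal{K}(\Gamma)$, which is $\gtrsim n^{-4/(d+3)}$ by \cite{Guntuboyina2012}; the same arithmetic shows a strict exponent improvement would beat $4/(d+3)$.

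The main obstacle is bookkeeping rather than conceptual. First, the quantifiers must be handled with care: the faster approximation rate has to be fed into the risk bound for the hard instances of the minimax problem, which is why the contradiction is set up against the supremum over $\mathcal{C}_L(\Omega)$ (resp.\ $\mathcal{K}(\Gamma)$), and one should check that negating the claimed $\ee$-indexed family of lower bounds genuinely yields a \emph{single} fixed $\ee_0$ together with an infinite sequence of exponent violations (choosing the constant in the negation large). Second, one must confirm that the logarithmic factors accumulated when solving for $n_j$ and resubstituting are truly subpolynomial, so that the strict gap $\gamma - 4/(d+4) > 0$ survives; this is the only delicate estimate, and it is dispatched exactly as the Lambert-$W$ asymptotics used in example~(ii) following Corollary~\ref{t:rates}. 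Everything else is a direct transcription of the optimization already performed in the proof of Corollary~\ref{t:rates}.
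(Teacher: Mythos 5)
Your proposal is correct and follows essentially the same route as the paper: assume a faster approximation rate, insert it into the risk bound of Theorem \ref{t:CRrate}, balance the two terms over $m$ (equivalently, choose $n$ as a function of $m$), and contradict the minimax lower bounds of Table \ref{table:minimax}, with the exponent arithmetic $\gamma=\rho/(\rho+1+t)>4/(d+4)$ matching the paper's $r/(t+r+1)>4/(d+4)$. Your extra care about the quantifiers (working against the worst-case $f_*$ in $\mathcal{C}_L(\Omega)$ and extracting a single $\ee_0$ with a subsequence $m_j$) is a welcome tightening of a point the paper's own proof glosses over, but it does not change the argument.
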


\begin{remark}
For constant $k$ (i.e. $t = 0$), Theorem \ref{t:approx} implies 
\[\inf_{f \in \mathcal{F}_{m,k}} \|f - f_*\|_{\infty} = \tilde{O}(n^{-2/d})\,\text{  and  } \,\inf_{S \in \mathcal{S}_{m,k}} d_H(S,K_*) = \tilde{O}(n^{-2/(d-1)}),\]
where the $\tilde{O}$ notation ignores polylogarithmic factors.
\end{remark}

\begin{proof}
We argue by contradiction. Suppose that for all $m > 0$,
\[\inf_{f \in \mathcal{F}_{m,k}} \|f - f_*\|^2_{\mu} \leq c_1 m^{-r},\]
for some constant $c_1$ (that may depend on $L$ and $\Omega$) and fixed $r > \frac{4}{d}(1 + t)$. Then by Theorem \ref{t:CRrate}, there exist constants $c_2$, $b$ such that
\begin{align*}
n^{-4/(d+4)} \leq c_2 \inf_{m > 0} \left( m^{-r} + \max\{\sigma^2, \Gamma^2\} m^{t+1} (d+1) \frac{\log(b n/k)}{n}\right).
\end{align*}
The infimum on the right side is achieved at $m_{\star} = \left(\frac{rn}{\max\{\sigma^2, \Gamma^2\} k (d+1) \log(bn/k)}\right)^{\frac{1}{t + r+1}}$, and thus
\begin{align*}
n^{-4/(d+4)} 
&\leq c_2 n^{-\frac{r}{t + r+1}}\log(bn/k)^{\frac{r}{t + r + 1}}(\max\{\sigma^2, \Gamma^2\} k (d+1))^{\frac{r}{t + r + 1}}\left[r^{\frac{-r}{t + r+1}} + r^{\frac{t + 1}{t + r+1}} \right].
\end{align*}
For this inequality to hold for all $n$, it must be that $r \leq \frac{4}{d}(1 + t)$, a contradiction. 
The second statement is proved similarly.
\end{proof}


\section{Computational Guarantees}\label{s:computational}


\subsection{Alternating Minimization Algorithm}

We now describe an alternating minimization algorithm to solve the non-convex optimization problem \eqref{eq:speclse}. 
Let $\xi_i = (x^{(i)},1) \in \RR^{d+1}$ for each $i=1, \ldots, n$ and let $\A_{*} \in (\mathbb{S}_k^m)^{d+1}$ be the true underlying parameters. That is, for each $i=1, \ldots, n$, we observe 
\[y_i = \lambda_{\max}(\A_*[\xi^{(i)}]) + \ee_i.\]
We assume the $\ee_i$'s are i.i.d. mean zero Gaussian noise with variance $\sigma^2$. 

One step of the algorithm starts with a fixed parameter $\A \in (\mathbb{S}_k^m)^{d+1}$. Then, compute the maximizing eigenvector $u^{(i)} \in \mathbb{S}^{m-1}$, $i=1, \ldots, n$, such that for $U^{(i)} = u^{(i)}(u^{(i)})^T$, $\langle U^{(i)}, \A[\xi^{(i)}] \rangle = \lambda_{\max}\left(\A[\xi^{(i)}]\right)$. With the $U^{(i)}$'s fixed, update $\A$ by solving the linear least squares problem:
\begin{align}\label{e:lls_Aupdate}
\A^{+} \in \mathrm{argmin}_{\A \in (\mathbb{S}_k^m)^{d+1}} \frac{1}{n} \sum_{i=1}^n \left(y^{(i)} - \langle U^{(i)}, \A[\xi^{(i)}] \rangle\right)^2, 
\end{align}
where $\langle U^{(i)}, \A[\xi^{(i)}] \rangle = \langle \A, \xi^{(i)} \otimes U^{(i)} \rangle = \sum_{j=1}^d \langle A_j, \xi^{(i)}_jU^{(i)}\rangle$.

\begin{algorithm}[ht]
    \caption{Alternating Minimization for Spectrahedral Regression}
    \textbf{Input}: Collection of inputs and outputs $\{(x^{(i)}, y^{(i)})\}_{i=1}^n$; initialization $\A \in (\mathbb{S}_k^{m})^{d+1}$ \\
    \textbf{Algorithm}: Repeat until convergence
    
   \hspace{.2in} \textbf{Step 1}: Update optimal eigenvector $u^{(i)} \leftarrow \lambda_{\max}(\A[\xi^{(i)}])$
    
    \hspace{.2in} \textbf{Step 2}: Update $\A$ by solving \eqref{e:lls_Aupdate}. $\A^{+} \leftarrow (\Xi_{\mathcal{A}}^T\Xi_{\mathcal{A}})^{-1}\Xi_{\mathcal{A}}^Ty$, where $\Xi_{\mathcal{A}}^T = (\xi^{(1)} \otimes U^{(1)} | \cdots | \xi^{(n)} \otimes U^{(n)}) \in \RR^{(d+1)m^2 \times n}$.

    \textbf{Output}: Final iterate $\A$
\end{algorithm}

\subsection{Convergence Guarantee}
The following result shows that under certain conditions, this alternating minimization procedure converges geometrically to a small ball around the true parameters given a good initialization. To state the initialization condition in the result, we define for $\A \in (\mathbb{S}_k^{m})^d$ the similarity transformation $\mathcal{O}(\A) = (OA_1O^T, \ldots, OA_dO^T)$ for an orthogonal $m \times m$ matrix $O$. Note that the eigenvalues of $\A[x]$ for $x \in \RR^d$ are invariant under any $\mathcal{O}$. In the following we only consider the setting where $k = m$, and denote $\mathbb{S}^{m} := \mathbb{S}_m^{m}$.

The proof of the following result appears after the statement and it depends on multiple lemmas that we state and prove in the appendix.

\begin{theorem}\label{t:conv_bnd}
Assume $X$ is a standard Gaussian random vector in $\RR^d$ 
and let $\xi = (X,1) \in \RR^{d+1}$.
Also suppose that the true parameter $\A_* \in (\mathbb{S}^m)^{d+1}$ satisfies the following spectral condition: 
\begin{align}\label{e:eigengap}
\inf_{u \in \mathbb{S}^{d-1}} \lambda_1(\A_*[u]) - \lambda_2(\A_*[u]) := \kappa > 0,
\end{align}
where $\lambda_1 := \lambda_{\max}$ and $\lambda_2$ is the second largest eigenvalue. Let $\tau \in (0,1)$. There exist constants $c_i$, $i=1, \ldots, 5$ such that if the initial parameter choice $\A^{(0)}$ satisfies
\[\|\A^{(0)} - \mathcal{O}(\A_*)\|^2_F \leq  \frac{\kappa^2}{16(d+1)m^2}\left(\frac{1 - \tau}{1 + \tau}\right),\]
for some similarity transformation $\mathcal{O}$ and 
\[n \geq c_1(d+1) \max\left\{\tau^{-2}m^{10},\frac{1}{\kappa^2}\left(\frac{1 + \tau}{1 - \tau}\right)\frac{ m^6(d+1)\sigma^2\log(n)^2}{(1-\tau)}\right\},\] 
then the error at iteration $t$ satisfies
\[ \|\A^{(t)} - \mathcal{O}(\A_*)\|_F^2 \leq \left(\frac{3}{4}\right)^{t}\|\A^{(0)} - \mathcal{O}(\A_*)\|_F^2 + \frac{c_3 m^4(d+1)\sigma^2\log(n)^2}{n(1 - \tau)},\] 
with probability greater than $1 - 4e^{-c_4\tau^2 n/m^{10}} - n^{-c_5m^2(d+1)}$.
\end{theorem}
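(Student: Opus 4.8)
The plan is to prove a one-step contraction estimate and then iterate it. First, by the orthogonal equivariance of the algorithm — both Step 1 and Step 2 commute with a fixed simultaneous conjugation $\A\mapsto\mathcal{O}(\A)$, and the error is measured modulo such $\mathcal{O}$ — it suffices to take $\mathcal{O}=I$, i.e. to assume $\A^{(0)}$ is close to $\A_*$ itself. For the one-step estimate, fix a deterministic $\A$ with $\|\A-\A_*\|_F$ below the initialization threshold, and let $\A^+$ be one iterate with eigenvector projectors $U^{(i)}$. Writing $y_i=\langle U_*^{(i)},\A_*[\xi^{(i)}]\rangle+\ee_i=\langle U^{(i)},\A_*[\xi^{(i)}]\rangle+r_i+\ee_i$, where $U_*^{(i)}$ is the true top-eigenvector projector and $r_i:=\langle U_*^{(i)}-U^{(i)},\A_*[\xi^{(i)}]\rangle\ge 0$ is the eigenvector-mismatch residual, the normal equations for \eqref{e:lls_Aupdate} give
\[
\A^+-\A_*=\Bigl(\tfrac1n\Xi_\A^T\Xi_\A\Bigr)^{-1}\tfrac1n\Xi_\A^T r+\Bigl(\tfrac1n\Xi_\A^T\Xi_\A\Bigr)^{-1}\tfrac1n\Xi_\A^T\ee,
\]
so that $\|\A^+-\A_*\|_F\le\|r\|_2/\sqrt{n\lambda_{\min}}+\|\Xi_\A^{\dagger}\ee\|_2$ with $\lambda_{\min}:=\lambda_{\min}(\tfrac1n\Xi_\A^T\Xi_\A)$ (restricted to symmetric matrices). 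It then remains to control (a) $\lambda_{\min}$ from below, (b) $\|r\|_2$, and (c) the Gaussian term.

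For (b), since $\A$ is close to $\A_*$, Weyl's inequality and the gap assumption \eqref{e:eigengap} — transported to $\xi^{(i)}$ by positive homogeneity, using $\|\xi^{(i)}\|_2\ge 1$ — show the top eigengap of $\A[\xi^{(i)}]$ is at least $\tfrac12\kappa\|\xi^{(i)}\|_2$; Davis–Kahan then gives $\|u^{(i)}-u_*^{(i)}\|_2\lesssim\|(\A-\A_*)[\xi^{(i)}]\|_{op}/(\kappa\|\xi^{(i)}\|_2)\lesssim\|\A-\A_*\|_F/\kappa$, the factor $\|\xi^{(i)}\|_2$ cancelling. Since $r_i=(u^{(i)})^T\!\bigl(\lambda_{\max}(\A_*[\xi^{(i)}])I-\A_*[\xi^{(i)}]\bigr)u^{(i)}\le\sin^2\angle(u^{(i)},u_*^{(i)})\cdot 2\|\A_*[\xi^{(i)}]\|_{op}$, this yields $|r_i|\lesssim\kappa^{-2}\|\A-\A_*\|_F^2\|\xi^{(i)}\|_2$ (absorbing $\|\A_*\|_F$ into constants), and concentration of $\sum_i\|\xi^{(i)}\|_2^2$ around $n(d+1)$ gives $\|r\|_2\lesssim\kappa^{-2}\sqrt{n(d+1)}\,\|\A-\A_*\|_F^2$ with high probability; together with (a) this bounds the bias term by a fraction of $\|\A-\A_*\|_F$ exactly when $\|\A-\A_*\|_F^2$ is below a constant times $\kappa^2/((d+1)m^2)$, which is the role of the initialization hypothesis. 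For (c), conditionally on the covariates the noise term is a centered Gaussian vector with covariance $\tfrac{\sigma^2}{n}(\tfrac1n\Xi_\A^T\Xi_\A)^{-1}$ in the $(d+1)\binom{m+1}{2}$-dimensional parameter space, so a $\chi^2$-tail bound gives $\|\Xi_\A^{\dagger}\ee\|_2^2\lesssim\sigma^2(d+1)m^2\log n/(n\lambda_{\min})$ with high probability; one further logarithmic factor is incurred in making this uniform, producing the stated floor $m^4(d+1)\sigma^2\log(n)^2/n$ after inserting $\lambda_{\min}\gtrsim 1/m^2$.

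Step (a) I expect to be the main obstacle: I need $\lambda_{\min}(\tfrac1n\Xi_\A^T\Xi_\A)\ge c>0$ uniformly over $\A$ in the initialization ball, since the iterates are data-dependent. This would go via (i) showing the population design operator $\A\mapsto\EE[\langle U_*(\xi),\A[\xi]\rangle^2]$ is bounded below by a constant of order $1/m^2$ on $\{\|\A\|_F=1\}$ — the one place where the gap and the Gaussianity of $X$ are genuinely used, through control of the law of the top eigenvector $u_*(\xi)$ of $\A_*[\xi]$; (ii) a matrix concentration bound (Matrix Bernstein after truncation, since $\xi^{(i)}\otimes U^{(i)}$ is degree two in a Gaussian times a unit-norm projector) showing $\tfrac1n\Xi_\A^T\Xi_\A$ is within a factor $1\pm\tau$ of its mean once $n\gtrsim\tau^{-2}m^{10}(d+1)$; and (iii) a net argument upgrading (ii) to hold simultaneously over the ball, using that $\A\mapsto U^{(i)}(\A)$ is $\lesssim\kappa^{-1}$-Lipschitz (again Davis–Kahan). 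A $\delta$-net of the ball (dimension $O(m^2 d)$) has cardinality $\exp(O(m^2 d\log(1/\delta)))$; taking $\delta$ polynomially small in $n$ produces the $n^{-c_5 m^2(d+1)}$ failure term, while the operator-norm concentration for the design and for $\sum_i\|\xi^{(i)}\|_2^2$ contribute the $e^{-c_4\tau^2 n/m^{10}}$ terms, the $m^{10}$ and $\tau$-dependence reflecting the lossy bounds needed for these heavy-tailed quantities; the accuracy parameter $\tau$ then propagates into the initialization radius and the sample-size requirement through the $\tfrac{1\pm\tau}{1\mp\tau}$ factors.

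Finally, combining (a)–(c) gives the one-step bound $\|\A^+-\A_*\|_F^2\le q\|\A-\A_*\|_F^2+(\text{noise floor})$ for some $q<1$ that may be taken to be $\tfrac34$. The second term of the sample-size hypothesis is precisely what forces the noise floor below the initialization threshold, so $\A^{(t)}$ never leaves the ball and the one-step bound applies at every $t$; unrolling the recursion gives $\|\A^{(t)}-\A_*\|_F^2\le(3/4)^t\|\A^{(0)}-\A_*\|_F^2+\tfrac{1}{1-3/4}\,(\text{noise floor})$, and a union bound over the finitely many high-probability events above yields the claimed probability.
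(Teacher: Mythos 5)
Your proposal follows essentially the same architecture as the paper's proof: reduce to $\mathcal{O}=I$ by equivariance; split one iterate into a bias term from the eigenvector mismatch and a projected-noise term; control the mismatch by Weyl plus Davis--Kahan and the eigengap (with the $\|\xi^{(i)}\|_2$ factors cancelling exactly as you say); establish two-sided spectral control of the design uniformly over the initialization ball via a net argument whose Lipschitz constant comes from Davis--Kahan (this is the content of Lemmas \ref{l:spectrum_bnd} and \ref{l:unif_cov}, and it is where the $m^{10}$ and the $e^{-c\tau^2 n/m^{10}}$ terms arise); bound the projected noise uniformly with an extra $\log n$ (Lemma \ref{l:noise_bnd}, giving the $n^{-cm^2(d+1)}$ term); then show the iterate stays in the ball and unroll. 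Two points, however, are genuine gaps relative to the stated theorem.

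First, your bound on the residual $r_i=\langle U_*^{(i)}-U^{(i)},\A_*[\xi^{(i)}]\rangle$ via $r_i\le 2\sin^2\angle(u^{(i)},u_*^{(i)})\,\|\A_*[\xi^{(i)}]\|_{op}$ is correct but carries a factor of the size of $\A_*$ that cannot be ``absorbed into constants'': the theorem's initialization radius is explicit in $\kappa,m,d,\tau$ only, and since $\kappa\le 2\sup_{\|v\|_2=1}\|\A_*[v]\|_{op}$, your route yields a contraction only on a ball of Frobenius radius of order $\kappa^2/\bigl(m\sqrt{d+1}\,\sup_{\|v\|_2=1}\|\A_*[v]\|_{op}\bigr)$, which is smaller than the stated $\kappa/(4m\sqrt{d+1})$ by the unbounded factor $\sup_v\|\A_*[v]\|_{op}/\kappa$. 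The paper avoids this loss by a sandwiching step that exploits the optimality of $U^{(i)}$ for the current iterate $\A$, not only that of $U_*^{(i)}$ for $\A_*$: since $\langle U^{(i)}-U_*^{(i)},\A[\xi^{(i)}]\rangle\ge 0$, one gets $0\le r_i\le\langle \A-\A_*,\ \xi^{(i)}\otimes(U^{(i)}-U_*^{(i)})\rangle$, and Davis--Kahan then gives $|r_i|\lesssim\kappa^{-1}\|\xi^{(i)}\|_2\,\|\A-\A_*\|_F^2$ with no dependence on the norm of $\A_*$. You should replace your spectral-decomposition estimate with this one to recover the stated basin of attraction.

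Second, you correctly single out the population lower bound on the design Gram operator, $\lambda_{\min}\bigl(\EE[(\xi\otimes U)(\xi\otimes U)^T]\bigr)\gtrsim m^{-2}$ restricted to the parameter space, as the crux of your step (a), but you leave it at ``I expect,'' with only a plan to analyze the law of the top eigenvector; as written the proposal does not contain a proof of this step. For comparison, the paper does not perform that analysis either: it combines the two-sided uniform concentration of Lemma \ref{l:unif_cov} with the elementary trace identity $\mathrm{Tr}\,\EE[(\xi\otimes U)(\xi\otimes U)^T]=(d+1)$ (using $\sum_{j,k}(u_ju_k)^2=1$), which yields a lower bound of $m^{-2}$ on the \emph{largest} population eigenvalue. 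If you want a self-contained lower bound on the minimum singular value of the design, the non-degeneracy argument you sketch in (a)(i) is exactly the work that needs to be supplied, so this remains an open item in your write-up.
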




\begin{remark}
The assumption \eqref{e:eigengap} is not always satisfied, but we provide some examples where it is. 
First, consider the case where $d=2$, $A_1$ and $A_2$ are non-commutative matrices, and $A_3 = 0$. Let $a_{ij} = (a^{(1)}_{ij}, a^{(2)}_{ij}) \in \RR^2$. In this case the eigengap is 
\begin{align*}
  \lambda_1(u_1A_1 + u_2A_2) - \lambda_2(u_1A_1 + u_2A_2) 
  &= \langle u, a_{11} - a_{22} \rangle^2 + 4 \langle u, a_{12}\rangle^2.
\end{align*}
This follows from the computation of eigenvalues using the characteristic polynomial and the quadratic formula.
We see that for any $A_1$ and $A_2$ such that $a_{12} \notin \{0, a_{11} - a_{22}\}$, the eigengap is a strictly positive number. For example, if $a_{11} - a_{22} = e_1$ and $2a_{12} = e_2$, then 
\begin{align*}
  \lambda_1(u_1A_1 + u_2A_2) - \lambda_2(u_1A_1 + u_2A_2)
  &=  \langle u, e_1 \rangle^2 + \langle u, e_2 \rangle^2 =  u_1^2 + u_2^2 = 1.
\end{align*}
Another set of parameters $\A_*$ that satisfy condition \eqref{e:eigengap} is when $\lambda_{\max}(\A_{*}[x]) = \|x\|_2$. In fact, for any spectrahedral function $f(x) = \lambda_{\max}(\A_{*}[x])$ that is differentiable for all $x$, $\A_*$ must necessarily satisfy \eqref{e:eigengap}.
\end{remark}

\begin{proof}
First, given assumption \eqref{e:eigengap}, we show that for $n$ large enough, for all parameters $\A$ satisfying for some similarity transform $\mathcal{O}$,
\begin{align}\label{e:initbnd}
\|\A - \mathcal{O}(\A_*)\|^2_F \leq  
\frac{\kappa^2}{16(d+1)m^2}\left(\frac{1 - \tau}{1 + \tau}\right),
\end{align}
the parameter $\A^{+}$ obtained after applying one iteration of the algorithm satisfies
\begin{align}\label{e:finalbnd}
\|\A^{+} - \mathcal{O}(\A_*)\|_F^2 \leq \frac{3}{4}\|\A - \mathcal{O}(\A^{*})\|_F^2 +  O\left(\frac{\log n}{n}\right)
\end{align}
with high probability.

Let $U^{(i)} = u^{(i)}(u^{(i)})^T$ be such that $\lambda_{\max}(\A[\xi^{(i)}]) = \langle U^{(i)}, \A[\xi^{(i)}]\rangle$. The update $\A^{+}$ then equals
\[\A^{+} = (\Xi_{\mathcal{A}}^T\Xi_{\mathcal{A}})^{-1}\Xi_{\mathcal{A}}^Ty,\]
where $\Xi_{\mathcal{A}}^T = (\xi^{(1)} \otimes U^{(1)} | \cdots | \xi^{(n)} \otimes U^{(n)}) \in \RR^{(d+1)m \times m \times n}$.  Note that $(\Xi_{\mathcal{A}} \A)_i = \langle U^{(i)}, \A[\xi^{(i)}]\rangle $. Throughout the rest of the proof, we sometime abuse notation and consider the Kronecker product $\xi \otimes U$ for $\xi \in \RR^{d+1}$ and $U \in \RR^{m \times m}$ to be the vector $\mathrm{Vec}(\xi \otimes U) \in \RR^{(d+1)m^2}$. 

By the invariance $\lambda_{\max}(\A[x]) = \lambda_{\max}(\mathcal{O}(\A)[x])$ for all $\mathcal{O}$, without loss of generality we can assume in the following that $\A_* = \mathcal{O}(\A_*)$ for the transformation $\mathcal{O}$ satisfying assumption \eqref{e:initbnd}. Let $y_{*} \in \RR^{n}$ and $u^{(i)}_* \in \mathbb{S}^{d-1}$ be such that for $U_*^{(i)} := u^{(i)}_*\left(u^{(i)}_*\right)^T$, 
\[y^{*}_i = \langle U^{(i)}_{*}, \A_{*}[\xi^{(i)}]\rangle = \lambda_{\max}(\A_{*}[\xi^{(i)}]).\] 
Also denote by $P_{\Xi_{\mathcal{A}}} = \Xi_{\mathcal{A}}(\Xi_{\mathcal{A}}^T\Xi_{\mathcal{A}})^{-1}\Xi_{\mathcal{A}}^T$ the orthogonal projection matrix onto the span of the columns of $\Xi_{\A}$. Then, we have the following deterministic upper bound:
\begin{align*}
    \|\Xi_{\mathcal{A}}(\A^{+} - \A_{*})\|^2 &= \|P_{\Xi_{\mathcal{A}}} y - \Xi_{\mathcal{A}} \A_{*} \|^2 = \|P_{\Xi_{\mathcal{A}}} y^* + P_{\Xi_{\mathcal{A}}} \ee - \Xi_{\mathcal{A}} \A_{*} \|^2 \\
    & \leq 2\|P_{\Xi_{\mathcal{A}}}(y^{*} - \Xi_{\mathcal{A}} \A_{*})\|^2 + 2\|P_{\Xi_{\mathcal{A}}}\ee\|^2 \\
    & \leq 2 \sum_{i=1}^n \left(\langle U_{*}^{(i)}, \A_{*}[\xi^{(i)}]\rangle - \langle U^{(i)}, \A_{*}[\xi^{(i)}]\rangle \right)^2 + 2\|P_{\Xi_{\mathcal{A}}}\ee\|^2.
\end{align*}
Now, since $\langle U^{(i)} - U^{(i)}_{*}, \A[\xi^{(i)}]\rangle \geq 0$,
\begin{align*}
\left(\langle U^{(i)}_{*}, \A_{*}[\xi^{(i)}]\rangle - \langle U^{(i)}, \A_{*}[\xi^{(i)}]\rangle \right)^2 
&\leq 
\left(\langle U^{(i)}_{*} -  U^{(i)}, \A_{*}'\xi^{(i)}\rangle + \langle U^{(i)} - U_{*}^{(i)}, \A[\xi^{(i)}]\rangle \right)^2 \\
&= 
\left\langle \A - \A_{*}, \xi^{(i)} \otimes (U^{(i)} - U^{(i)}_{*}) \right\rangle^2.
\end{align*}
We also have the lower bound
$\|\Xi_{\mathcal{A}}(\A^+ - \A_*)\|^2 \geq \lambda_{\min}(\Xi_{\mathcal{A}}^T\Xi_U)\|\A^{+} - \A_{*}\|^2$. Thus,
\begin{align}\label{e:Adiff_bnd}
\|\A^{+} - \A_{*}\|^2 &\leq \frac{2}{\lambda_{\min}(\Xi_{\A}^T\Xi_{\A})}\left[ \|\Xi_{\A- \A_*}(\A - \A_{*})\|_2^2 + \|P_{\Xi_{\A}}\ee\|^2\right]\nonumber \\
&\leq \frac{2}{\lambda_{\min}(\Xi_{\A}^T\Xi_{\A})}\left[ \lambda_{\max}(\Xi_{\A- \A_*}^T\Xi_{\A- \A_*}) \|\A - \A_{*}\|^2 + \|P_{\Xi_{\A}}\ee\|^2\right].
\end{align}
where $\Xi_{\A-\A_*} = \left(\xi^{(1)} \otimes (U^{(1)} - U^{(1)}_{*}) | \cdots | \xi^{(n)} \otimes (U^{(n)} - U^{(n)}_{*})\right)$.

Lemmas \ref{l:spectrum_bnd} and \ref{l:unif_cov} then imply the following. For $\tau \in(0,1)$, there exist absolute constants $c_1, c_2$ such that if $n \geq c_1 \tau^{-2}(d+1)m^{10}$, then with probability greater than $1 - 2e^{-c_2\tau^2n/m^{10}}$,
\begin{align}\label{e:lmax_up1}
 \lambda_{\max}(\Xi_{\A- \A_*}^T\Xi_{\A- \A_*}) \leq n \lambda_{\max}(\EE[(\xi \otimes (U- U_{*}))(\xi \otimes (U- U_{*}))^T])\left(1 + \tau\right)
 \end{align}
 for all $\A$ satisfying assumption \eqref{e:initbnd}.
Since $\lambda_{\max}$ is convex function,  Jensen's inequality implies
 \begin{align*}
\lambda_{\max}(\EE[(\xi \otimes (U- U_*))(\xi \otimes (U - U_*))^T]) \leq   \EE[\|\xi \otimes (U - U_*)\|^2].
\end{align*}
Then, by the definition of the Kronecker product,
\[\|\xi \otimes (U - U_*)\|^2 = \sum_{i=1}^d \sum_{j,k=1}^m \xi_i^2 (U - U_*)_{jk}^2 = \|\xi\|_2^2\|U - U_*\|_F^2.\]
Next note that $\|U - U_*\|_F^2 \leq 2 \|u - u_*\|_2^2$, where $U = u u^T$, $U_* = u_* u_*^T$ and $u, u_* \in \mathbb{S}^{d-1}$. Then, by a variation of the  Davis-Kahan Theorem (Corollary 3 in \cite{WangSamworth2015}),
\begin{align*}
    \|u-u_{*}\|_2 &\leq \frac{2^{3/2}\|(\A - \A_*)'\xi\|_{op}}{\lambda_1(\A_*'\xi) - \lambda_2(\A_*'\xi)} 
    \leq 2^{3/2}\kappa^{-1}\|\A - \A_*\|_F.
\end{align*}
Putting the bounds together and using assumption \eqref{e:initbnd}, 
\begin{align}\label{e:lmax_up2}
    \lambda_{\max}(\EE[(\xi \otimes (U- U_*))(\xi \otimes (U - U_*))^T]) &\leq  2^{5/2}\kappa^{-2} \|\A - \A_*\|_F^2 \EE[\|\xi\|_2^2] \nonumber \\
    &\leq 6\kappa^{-2}(d+1) \|\A - \A_*\|_F^2 \leq \frac{3}{8m^2}\left(\frac{1 - \tau}{1 + \tau}\right). 
\end{align}
Plugging the bound \eqref{e:lmax_up2} into \eqref{e:lmax_up1} gives
\begin{align}\label{e:lmax_up_final}
    \lambda_{\max}(\Xi_{\A- \A_*}^T\Xi_{\A- \A_*}) \leq \frac{3n}{8m^2}\left(1 - \tau\right).
\end{align}

Also by Lemmas \ref{l:spectrum_bnd} and \ref{l:unif_cov} if $n \geq c_1 \tau^{-2} (d+1)m^{10}$, then with probability greater that $1 - 2e^{-c_2\tau^2n/m^{10}}$,
\begin{align}\label{e:lmax_low1}
\lambda_{\min}(\Xi_{\A}^T\Xi_{\A}) \geq n\lambda_{\max}(\EE[(\xi \otimes U)(\xi \otimes U)^T]) \left(1 - \tau\right)
\end{align}
for all $\A$ satisfying \eqref{e:initbnd}.
We then have the following lower bound: 
\begin{align}\label{e:lmax_low2}
&\lambda_{\max}(\EE[(\xi \otimes U)(\xi \otimes U)^T]) \geq \frac{1}{(d+1)m^2}\mathrm{Tr}\left[\EE(\xi \otimes U)(\xi \otimes U)^T\right] \nonumber \\
&\quad = \frac{1}{(d+1)m^2}\sum_{i=1}^{d+1} \sum_{j,k=1}^m \EE[\xi_i^2 (u_j u_k)^2] = \frac{1}{(d+1)m^2}\sum_{i=1}^{d+1} \EE[\xi_i^2] = m^{-2}.   
\end{align}
Plugging the bound \eqref{e:lmax_low2} into \eqref{e:lmax_low1} gives
\begin{align}\label{e:lmax_low_final}
  \lambda_{\min}(\Xi_{\A}^T\Xi_{\A})\geq n m^{-2} \left(1 - \tau\right)  ,
\end{align}
and finally combining \eqref{e:lmax_up_final} and \eqref{e:lmax_low_final} with \eqref{e:Adiff_bnd} implies
\begin{align*}
\|\A^{+} - \A^{*}\|_F^2 
&\leq \frac{3}{4}\|\A - \A^{*}\|_F^2 +  \frac{m^2\|P_{\Xi_{\A}}\ee\|_2^2}{n(1 - \tau)} . 
\end{align*}
It remains to bound the error term. For this, we apply Lemma \ref{l:noise_bnd}, which says that there exist constants $c_3, c_4 > 0$ such that 
\[ \|P\ee\|_2^2  \leq c_3\log(n)^2 \sigma^2 m^2(d+1)\]
for all $\A$ satisfying \eqref{e:initbnd} 
with probability greater than $1 - e^{-c_{4} (d+1)m^2 \log(n)}$.

This implies that for $n \geq c_1\tau^{-2}(d+1)m^{10}$, with probability $1 - 4e^{-c_2\tau^{2}n/m^{10}} - n^{-c_4m^2(d+1)}$,
\begin{align*}
\|\A^{+} - \A^{*}\|_F^2 
&\leq \frac{3}{4}\|\A - \A^{*}\|_F^2 +  \frac{c_3 m^4(d+1)\sigma^2\log(n)^2}{n(1 - \tau)}. 
\end{align*}
We now show that given the above upper bound, $\A^{+}$ also satisfies \eqref{e:initbnd}. Indeed, for 
\[n \geq  4\cdot 16 m^2(d+1)
\frac{1}{\kappa^2}\left(\frac{1 + \tau}{1 - \tau}\right)\frac{c_3 m^4(d+1)\sigma^2\log(n)^2}{(1-\tau)},\]
we have
\begin{align*}
    \frac{c_3 m^4(d+1)\sigma^2\log(n)^2}{n(1-\tau)} \leq \frac{\kappa^2}{4}\cdot \frac{1}{16m^2(d+1)}\left(\frac{1 - \tau}{1 + \tau}\right)
\end{align*}
and thus
\begin{align*}
 \|\A^{+} - \A_*\|_F^2 &\leq \frac{3}{4}\|\A - \A_*\|_F^2 + \frac{c_3 m^4(d+1)\sigma^2\log(n)^2}{n(1-\tau)} \leq \frac{\kappa^2}{16m^2(d+1)}\left(\frac{1 - \tau}{1 + \tau}\right). 
\end{align*}

The final conclusion follows from the fact that after $t$ iterations, applying the bound \eqref{e:finalbnd} $t$ times gives
\begin{align*}
   \|\A^{(t)} - \A_*\|_F^2 &\leq \left(\frac{3}{4}\right)^t\|\A^{(0)} - \A_*\|_F^2 + \frac{c_3m^4(d+1)\sigma^2\log(n)^2}{n(1 - \tau)}\sum_{k=0}^{\infty} \left(\frac{3}{4}\right)^k  \\
   &\leq \left(\frac{3}{4}\right)^t\|\A^{(0)} - \A_*\|_F^2 + \frac{c_4 m^4(d+1)\sigma^2\log(n)^2}{n(1 - \tau)},
\end{align*}
and all $t$ bounds hold simultaneously with probability at least $1 - 4e^{-c_2\tau^{-2}n/m^{10}} - n^{-c_4m^2(d+1)}$. 
\end{proof}

 \section{Numerical Experiments}\label{s:examples}
 
In this section, we empirically compare spectrahedral and polyhedral regression for estimating a convex function from data. 
More specifically, we compare $(m,m)$-spectrahedral estimators to $m(m+1)/2$-polyhedral estimators, both of which have $m(m+1)/2$ degrees of freedom per dimension. For each experiment, we apply the alternating minimzation algorithm with multiple random initializations, and the solution that minimizes the least squared error is selected. We adapted the code \cite{SohGithub} for support function estimation used in \cite{Soh19} for spectrahedral regression. 
 
 \subsection{Synthetic Regression Problems}
 
 The first experiments use synthetically generated data from a known convex function, one from a spectrahedral function and another from a convex function that is neither polyhedral nor spectrahedral.
 In both problems below, the root-mean-squared error (RMSE) is obtained by first obtaining estimators form 200 noisy training data points and then evaluating the RMSE of the estimators on 200 test points generated from the true function. We ran the alternating minimization algorithm with 50 random initializations for 200 steps or until convergence, and chose the best estimator. 

 First, we consider $n$ i.i.d. data points distributed as $(X,Y)$, where $X \in \RR^2$ is uniformly distributed in $[-1,1]^2$, and
 \begin{align}\label{e:l2norm}
 Y = \sqrt{X_1^2 + X_2^2} + \ee,
 \end{align}
 where $\ee \sim \mathcal{N}(0, 0.1^2)$. In Figure \ref{f:l2norm}, we have plotted polyhedral and spectrahedral estimators obtained from $n = 20, 50$ and $200$ data points. The RMSE for both models is given in Table \ref{ta:RSME}. The function $y = \|x\|_2$ is a spectrahedral function, and the spectrahedral estimator performs better than the polyhedral estimator as expected.
 
 \begin{figure}[h!]
     \centering
     \includegraphics[width=.9\textwidth]{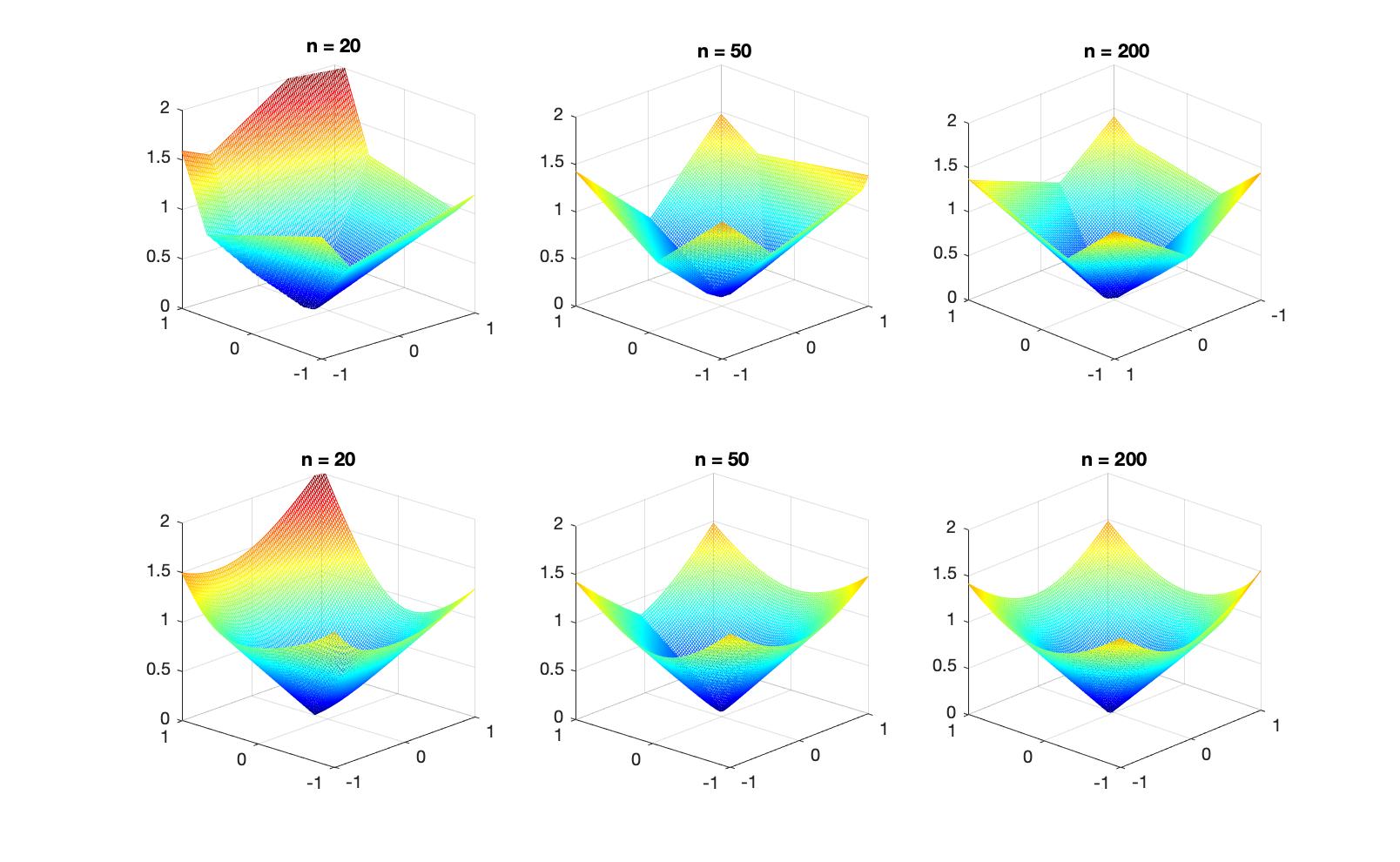}
     \caption{Polyhedral ($m = 6$) and Spectrahedral ($m= 3$) reconstructions of the convex function $y = \|x\|_2$ from $n = 20$, $50$, and $200$ data points from model \eqref{e:l2norm}.}
     \label{f:l2norm}
 \end{figure}
 
 


Second, we consider $n$ i.i.d. data points generated as $(X,Y) \in \RR \times \RR$, where $X \sim \mathcal{N}(0,1)$ and 
\begin{align}\label{e:exp}
Y = \exp(b X) + \ee,
\end{align}
where $b = 1.1394$ 
and $\ee \sim \mathcal{N}(0,0.1^2)$. The underlying convex function is neither polyhedral nor spectrahedral, but the spectrahedral estimator better captures the smoothness of the function as illustrated in Figure \ref{f:expplots}. The spectrahedral estimator also outperforms the polyhedral estimator with respect to the RMSE when comparing the model fitted to the training data set to the test data set, see Table \ref{ta:RSME}.

 \begin{figure}[h!]
     \centering
     \includegraphics[width=.8\textwidth]{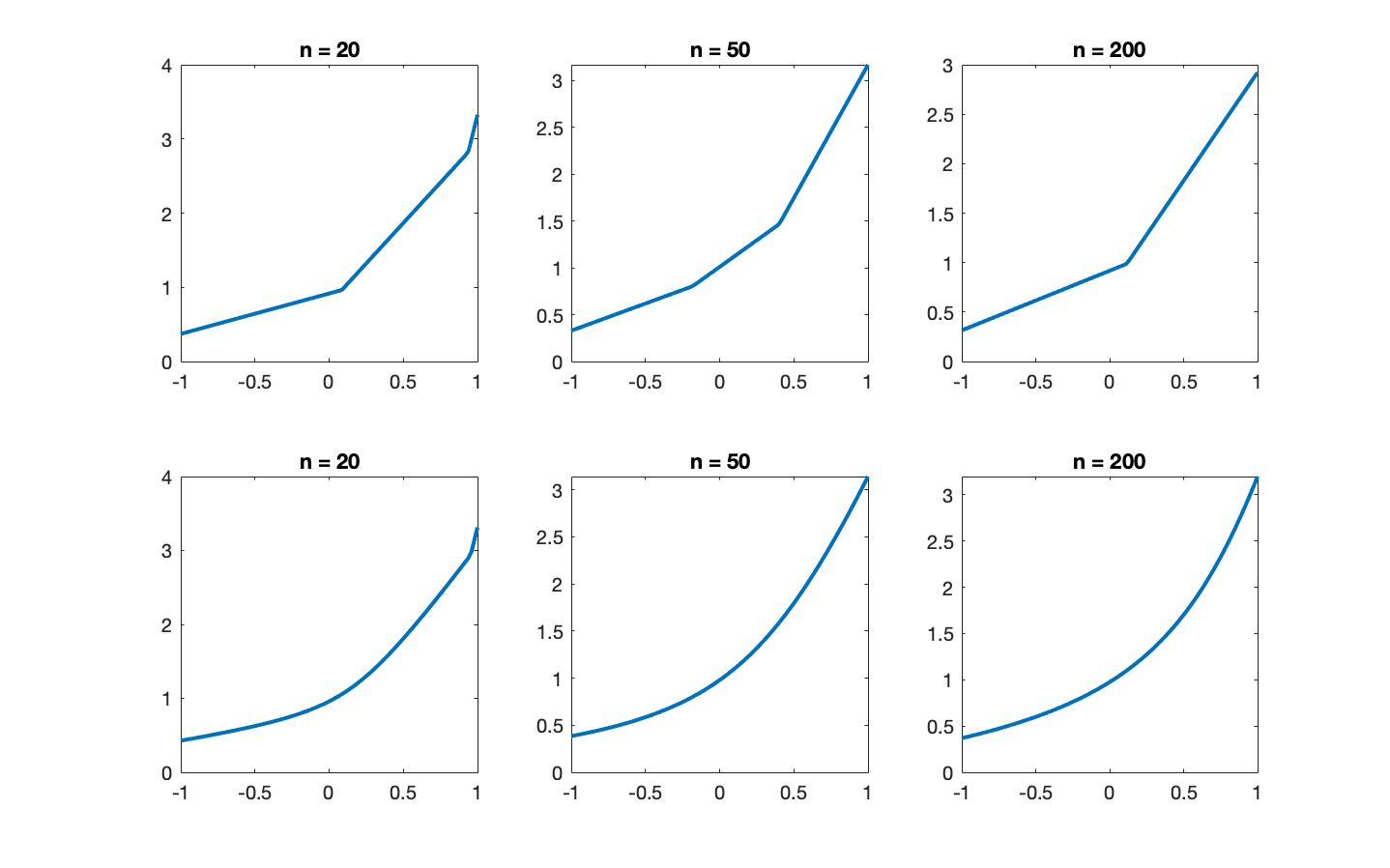}
     \caption{Polyhedral ($m = 6$) and Spectrahedral ($m= 3$) reconstructions of the convex function $y = \exp(\langle x,b\rangle)$ from $n = 20$, $50$, and $200$ noisy data points from model \eqref{e:exp}.}
     \label{f:expplots}
 \end{figure}
 
 \begin{table}
  \caption{RSME for polyhedral and spectrahedral estimators of $y = \exp(bx)$ from model \eqref{e:exp} as $m$ increases.}\label{ta:RSME}
  \centering
    \begin{tabular}{c| c | c | c } 
    Model & $m(m+1)/2$ & Spectrahedral & Polyhedral \\
 \midrule\midrule
    \multirow{3}{*}{\eqref{e:l2norm}} & 3 & 0.0183 & 0.1332 \\
    & 6 & 0.0207 & 0.0416 \\
    &  10 & 0.0243 &  0.0362 \\ \midrule
         \multirow{3}{*}{\eqref{e:exp}} & 3 & 0.1098 & 0.2281\\   & 6 & 0.0902 & 0.1153 \\
      & 10 & 0.0793 & 0.0902\\ \midrule
    \end{tabular}

  \end{table}
 

\subsection{Predicting Average Weekly Wages}

The first experiment we perform on real data is predicting average weekly wages based on years of education and
experience. This data set is also studied in \cite{HannahDunson2013}. The data set is from the 1988 Current Population Survey (CPS) and can be obtained as the data set ex1029 in the Sleuth2 package in R. It consists of 25,361 records of weekly wages for full-time, adult, male workers for 1987, along
with years experience and years of education. 
It is reasonable to expect that wages are concave with respect the years experience. Indeed, at first wages increase with more experience, but with a decreasing return each year until a peak of earnings is reached, and then they begin to decline. 
Wages are also expected to increase as the number of years of education increases, but not in a concave way. However, as in \cite{HannahDunson2013}, we
use the transformation $1.2^{\text{years education}}$ to obtain a concave relationship. 
We used polyhedral and spectrahedral regression to fit convex functions to this data set, as illustrated in Figure \ref{f:3dWage}. We also estimated the RMSE for different values of $m(m+1)/2$ (the degrees of freedom per dimension) through hold-out validation with 20\% of the data points, see Table \ref{ta:RSMEReal}. This generalization error is smaller for the spectrahedral estimator than the polyhedral estimator in each case.

 
 
 


\subsection{Convex Approximation in Engineering Applications}

In the following two examples, we consider applications of convex regression in engineering applications where the goal is to subsequently use the convex estimator as an objective or constraint in an optimization problem. Polyhedral regression returns a convex function compatible with a linear program, and using spectrahedral regression provides an estimator compatible with semidefinite programming.

\subsubsection{Aircraft Data}

In this experiment, we consider the XFOIL aircraft design problem studied in \cite{Hoburgetal2015}. The profile drag on an airplane wing is described by a coefficient CD that is a function of the Reynolds number (Re), wing thickness ratio ($\tau$), and lift coefficient (CL).  
There is not an analytical expression for this relationship, but it can be simulated using XFOIL \cite{XFOIL}. For a fixed $\tau$, after a logarithmic transformation, the data set can be approximated well by a convex function. We fit both spectrahedral and polyhedral functions to this data set, and 
the best fits for the whole data set appears in Figure \ref{f:drag} for models with 6 degrees for freedom per dimension. Then, we performed hold-out validation, training on 80\% of the data and testing on the remaining 20\%. The RMSE is given in Table \ref{ta:RSMEReal}, where we observe that the spectrahedral estimator achieves a smaller error than polyhedral regression. 

 \begin{figure}[h!]
     \centering
     \includegraphics[width=.9\textwidth]{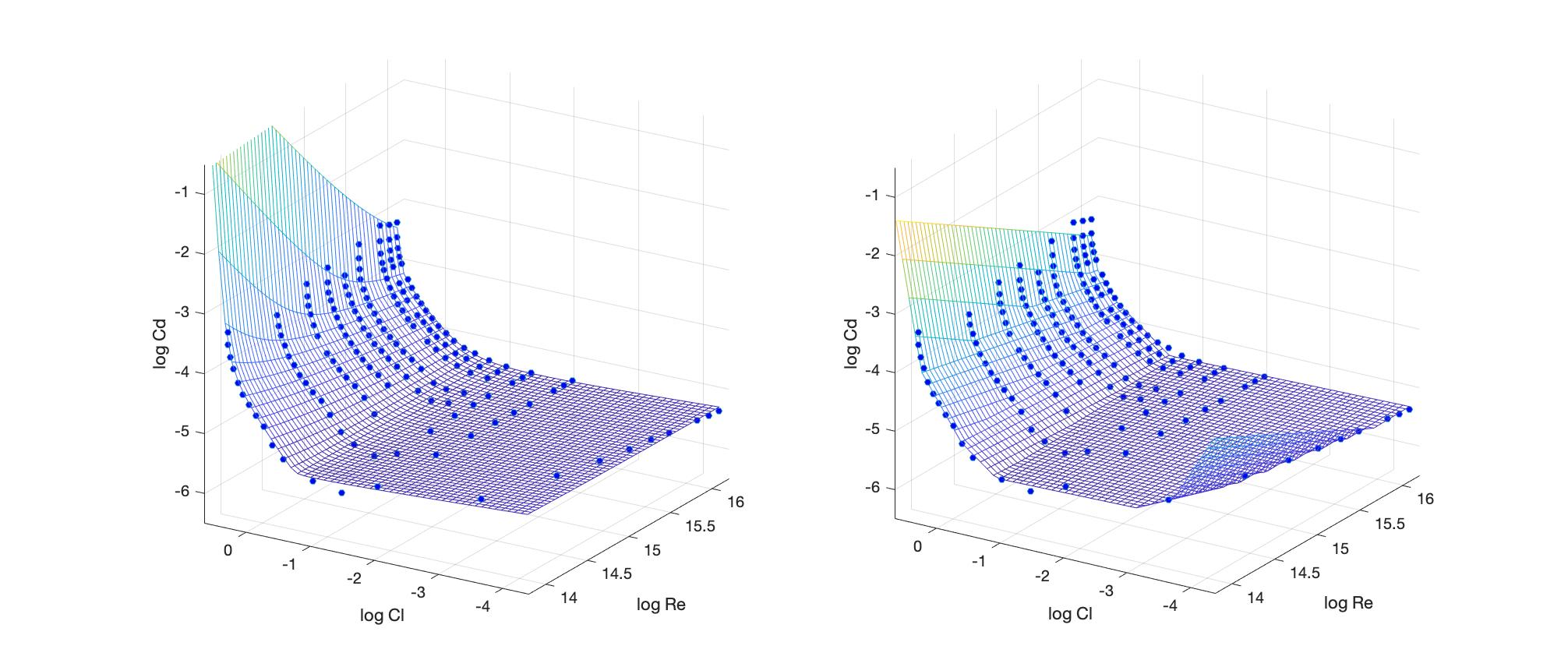}
     \caption{Spectrahedral ($m= 3$) and Polyhedral ($m = 6$) estimators of the log of drag coefficient vs log of Reynolds number and lift coefficient for a fixed thickness ratio $\tau = 8\%$.}
     \label{f:drag}
 \end{figure}

\subsubsection{Power Modeling For Circuit Design}

A circuit is an interconnected collection of electrical components including batteries, resistors, inductors, capacitors, logical gates, and transistors. In circuit design, the goal is to optimize over variables such as devices, gates, threshold, and power supply voltages in order to minimize circuit delay or physical area. 
The power dissipated, $P$, is a function of gate supply $V_{dd}$ and threshold voltages $V_{th}$. The following model, see \cite{Hoburgetal2015} and \cite{HannahDunson2012}, can be used to study this relationship:
\begin{align*}
    P = V_{dd}^2 + 30V_{dd}e^{-(V_{th} - 0.06 V_{dd})/0.039}.
\end{align*}
We generate $n$ i.i.d. data points as in \cite{HannahDunson2012} as follows. For each input-output pair, first sample $u = (V_{dd}, V_{th})$ uniformly over the domain $1.0 \leq V_{dd} \leq 2.0$ and $0.2 \leq V_{th} \leq 0.4$ and compute $P(u)$. Then, apply the transformation $(x,y) = (\log u, \log P(u))$. We fit this collection of transformed data points using polyhedral and spectrahedral regression, and the estimators for $n = 20, 50$, and $200$ are illustrated in Figure \ref{f:CircuitPlots}. We also perform hold-out validation with 20\% of the data for the case $n = 200$ and the RMSE appears in in Table \ref{ta:RSMEReal}. By this measure, the spectrahedral estimator performs much better than the polyhedral estimator in this application. 

\begin{figure}[h!]
     \centering
     \includegraphics[width=.9\textwidth]{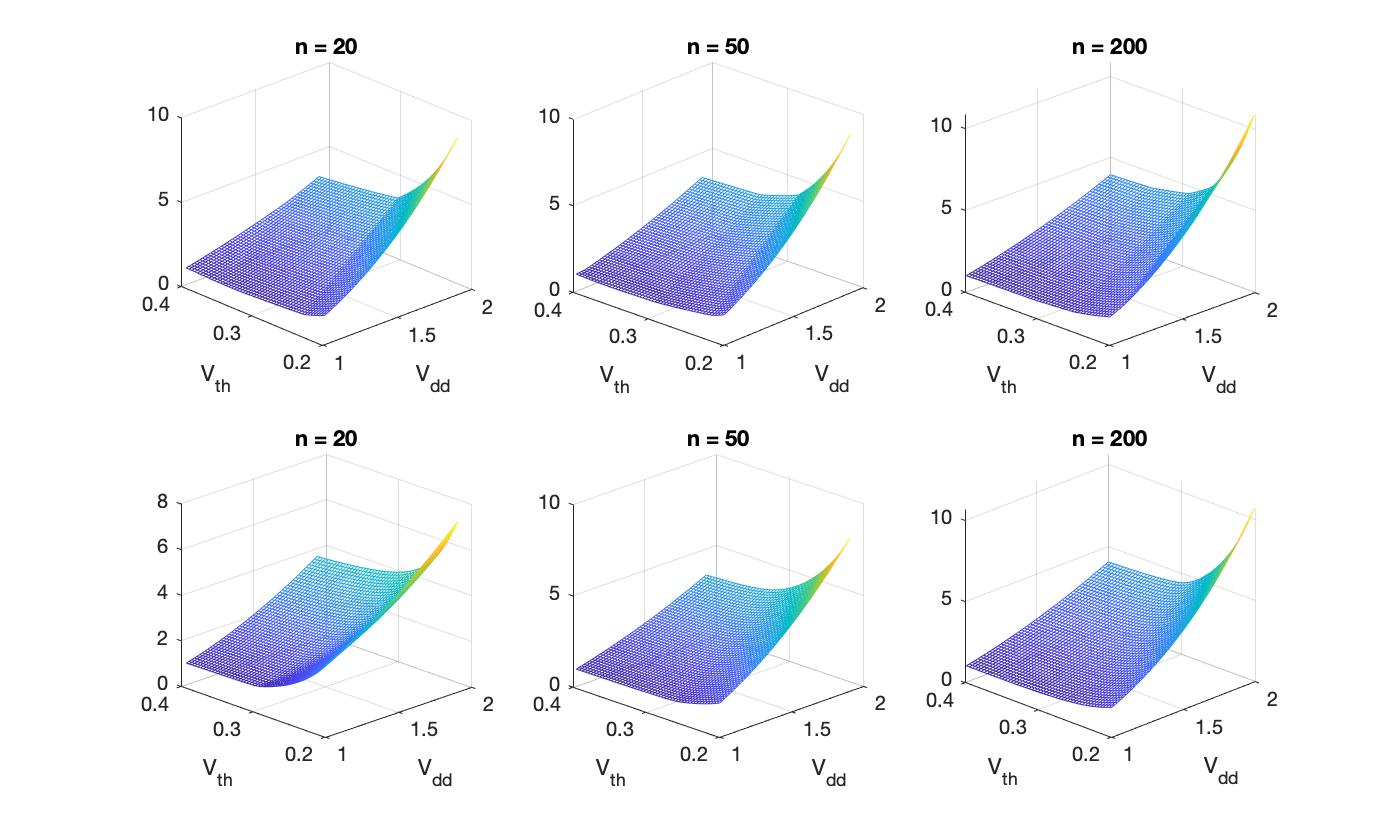}
     \caption{Polyhedral ($m = 6$) and Spectrahedral ($m= 3$) estimators of $n = 20, 50,$ and $200$ transformed data points generated from the power dissipation model.}
     \label{f:CircuitPlots}
 \end{figure}
 
 
 \begin{table}
  \caption{RSME for polyhedral and spectrahedral estimators for real data and engineering experiments.}\label{ta:RSMEReal}
  \centering
    \begin{tabular}{c| c | c | c } 
    Application & $m(m+1)/2$ & Spectrahedral & Polyhedral \\
 \midrule\midrule
    \multirow{3}{*}{Average Weekly Wages} & 3 & 142.1166 & 145.5803 \\
     & 6 & 140.1173 & 141.4989 \\
      & 10 & 140.0352 & 141.9851 \\ \midrule
         \multirow{3}{*}{Aircraft Profile Drag} & 3 & 0.086 & 0.0895 \\   & 6 & 0.0576 & 0.0709 \\
      & 10 & 0.0452 & 0.0515\\ \midrule
      \multirow{3}{*}{Circuit Design} & 3 & 0.0085 & 0.02 \\   & 6 & 0.0072 & 0.012 \\
      & 10 & 0.0072 & 0.0088 \\ \midrule
    \end{tabular}
  \end{table}

 \section{Discussion and Future Work}\label{sec:discussion}
 
 In this work, we have introduced spectrahedral regression as a new method for estimating a convex function from noisy measurements. Spectrahedral estimators are appealing from a qualitative and quantitative perspective and we have shown they hold advantages over the usual LSE methods as well as polyhedral estimators when the underlying convex function is non-polyhedral. 
 Our theoretical results and numerical experiments call for further study of the expressivity of this model class and its computational advantages. We now describe a few directions of future research.
 
 An interesting open question is to obtain the approximation rate for $(m,k)$-spectrahedral functions to the class of Lipschitz convex functions and $(m,k)$- spectratopes to the class of convex bodies for general $k$. There is extensive literature on this approximation question for polytopes (see, for instance, \cite{Bronshtein2008, Dudley74}), and we have obtained matching bounds (up to logarithmic factors) for fixed $k >1$. For $k$ depending on $m$, and in particular in the case $k = m$, the literature is more limited, one example is \cite{Barvinok2012}. Progress in this direction would complete our understanding of the expressive power of the model presented here and have important consequences for how well semidefinite programming can approximate a general convex program.
 
We have also proved computational guarantees for a natural alternating minimization algorithm for spectrahedral regression. However, this convergence guarantee depends on a good initialization. In practice, running the algorithm with multiple random initializations and taking the estimator with the smallest error works well, but it would be very interesting to extend the results on initialization in \cite{Ghoshetal2020Gaussian} to the spectrahedral case. 
 Another line of future work is to extend other methods to solve the non-convex optimization \eqref{eq:speclse} in the max-affine case such as the adaptive partitioning method in \cite{HannahDunson2013} and the method proposed in \cite{Balazs2015}. These algorithms also lack theoretical guarantees and it would be interesting to obtain conditions under which these methods obtain good estimates of the true parameter. 
 
\section*{Acknowledgments}
E. O. was supported by NSF MSPRF Award 2002255 with additional funding from ONR Award N00014-18-1-2363. V. C. was supported in part by National Science Foundation grant
CCF-1637598, in part by National Science Foundation grant DMS-2113724,
and in part by AFOSR grant FA9550-20-1-0320. 

\bibliographystyle{siamplain}
\bibliography{biblio}
 
 \appendix

\section{Lemmas for the Proof of Theorem \ref{t:conv_bnd}}

We first give a few definitions that are needed in following lemmas. A random vector $\xi \in \RR^d$ is sub-Gaussian with parameter $\eta$ if $\EE[X] = 0$ and for each $u \in \mathbb{S}^{d-1}$, $\EE\left[e^{\lambda\langle u, X\rangle}\right] \leq e^{\lambda^2\eta^2/2}, \quad \text{ for all } \lambda \in \RR$.
The sub-Gaussian norm of a random variable $X$, denoted by $\|X\|_{\psi_2}$, is defined as
\[\|X\|_{\psi_2}= \inf\{t > 0 : \EE[\exp(X^2/t^2)] \leq 2\}.\]
For $\xi \in \RR^d$, the sub-Gaussian norm is defined as
$\|\xi\|_{\psi_2} := \sup_{u \in \mathbb{S}^{d-1}}\|\langle \xi, u \rangle \|_{\psi_2}$.
The sub-exponential norm of $X$, denoted 
by $\|X\|_{\psi_1}$, is defined as
\[\|X\|_{\psi_1}= \inf\{t > 0 : \EE[\exp(|X|/t)] \leq 2\},\]
and the sub-exponential norm of a vector is defined similarly.

We also recall that the covering number of a Euclidean ball satisfies
\begin{align}\label{e:cov_bnd}
\mathcal{N}(B_q(z,R), \|\cdot\|_2, \ee) \leq (1 + 2R/\ee)^q
\end{align}
for $\ee \leq 2r$ by a standard volume argument. 

The proofs rely on uniform spectral concentration bounds of a sample covariance matrix, which follow from Bernstein's inequality and Dudley's inequality. A general reference for the ideas in the lemmas below is \cite{VershyninBook}.



\begin{lemma}\label{l:spectrum_bnd}
Let $\xi$ be an $\eta$-sub-Gaussian r.v. in $\RR^d$ 
and $\A_* \in (\mathbb{S}^m)^d$ be such that 
\begin{align*}
    \inf_{u \in \mathbb{S}^{d-1}} \lambda_1(\A_*[u]) - \lambda_2(\A_*[u]) := \kappa > 0.
\end{align*}
Define the set $B(\A_*,\kappa/4) := \{\A \in (\mathbb{S}^m)^d :   \|\A - \A_{*}\|_F \leq \frac{\kappa}{4}\}$. 
For each $\A \in (\mathbb{S}^m)^d$, define $U_{\A}$ to be the rank one matrix such that 
\[\langle \xi \otimes U_{\A}, \A \rangle = \langle U_{\A}, \A\xi \rangle = \lambda_{\max}(\A\xi).\] Then, $\|\xi \otimes U_{\A}\|_{\psi_2} \leq \eta m^2$ and for all $\A_1, \A_2 \in B(\A_*, r)$, 
\begin{align*}
\|\xi \otimes U_{\A_1} - \xi \otimes U_{\A_2}\|_{\psi_2} \leq \frac{8m^2}{\kappa}\|\A_1 - \A_2\|_F.
\end{align*}


\end{lemma}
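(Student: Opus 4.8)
The plan is to reduce both bounds to a single sub-Gaussian estimate on the random matrix pencil $W[\xi] := \sum_{i=1}^d \xi_i W_i$, where $W = (W_1,\dots,W_d)$ ranges over unit vectors of $(\RR^{m\times m})^d$. The observation that decouples the (otherwise awkward) $\xi$-dependence of the optimal eigenprojection $U_{\A}$ is the pointwise Cauchy--Schwarz bound: since $U_{\A}$ is a rank-one projection, $\|U_{\A}\|_F = 1$, so for every $W$,
\[
|\langle \xi \otimes U_{\A}, W \rangle| = \Big|\big\langle U_{\A},\, \textstyle\sum_i \xi_i W_i \big\rangle\Big| = |\langle U_{\A}, W[\xi]\rangle| \le \|U_{\A}\|_F\,\|W[\xi]\|_F = \|W[\xi]\|_F .
\]
Hence $\|\xi\otimes U_{\A}\|_{\psi_2} \le \sup_{\|W\|_F=1}\big\|\,\|W[\xi]\|_F\,\big\|_{\psi_2}$, and it suffices to control the right-hand side, which no longer involves $\A$ at all.

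For that, write $\|W[\xi]\|_F^2 = \sum_{j,k=1}^m \langle \xi, W_{\cdot jk}\rangle^2$, where $W_{\cdot jk}\in\RR^d$ collects the $(j,k)$ entries of $W_1,\dots,W_d$, and note $\sum_{j,k}\|W_{\cdot jk}\|_2^2 = \|W\|_F^2 = 1$. Each $\langle\xi, W_{\cdot jk}\rangle$ is $\eta\|W_{\cdot jk}\|_2$-sub-Gaussian, so $\langle\xi,W_{\cdot jk}\rangle^2$ is sub-exponential with $\psi_1$-norm $\le \eta^2\|W_{\cdot jk}\|_2^2$; summing via the triangle inequality for $\|\cdot\|_{\psi_1}$ gives $\big\|\,\|W[\xi]\|_F^2\,\big\|_{\psi_1} \le \eta^2$, hence $\big\|\,\|W[\xi]\|_F\,\big\|_{\psi_2} = \sqrt{\big\|\,\|W[\xi]\|_F^2\,\big\|_{\psi_1}} \le \eta \le \eta m^2$. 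This proves the first claim (with considerable room to spare).

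For the second claim the same reduction gives $\|\xi\otimes U_{\A_1} - \xi\otimes U_{\A_2}\|_{\psi_2} = \|\xi\otimes(U_{\A_1}-U_{\A_2})\|_{\psi_2} \le \eta\,\|U_{\A_1}-U_{\A_2}\|_F$ (using $|\langle U_{\A_1}-U_{\A_2}, W[\xi]\rangle|\le\|U_{\A_1}-U_{\A_2}\|_F\|W[\xi]\|_F$ and the previous paragraph), so it remains to show $\|U_{\A_1}-U_{\A_2}\|_F \le \tfrac{8}{\kappa}\|\A_1-\A_2\|_F$. By homogeneity, assumption \eqref{e:eigengap} gives $\lambda_1(\A_*[\xi]) - \lambda_2(\A_*[\xi]) \ge \kappa\|\xi\|_2$; and since $\|(\A_\ell - \A_*)[\xi]\|_{op} \le \|\xi\|_2\|\A_\ell-\A_*\|_F \le \tfrac{\kappa}{4}\|\xi\|_2$ for $\A_\ell\in B(\A_*,\kappa/4)$, Weyl's inequality yields $\lambda_1(\A_\ell[\xi]) - \lambda_2(\A_\ell[\xi]) \ge \tfrac{\kappa}{2}\|\xi\|_2$ for $\ell=1,2$. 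Choosing signs so that $u_{\A_1}^\top u_{\A_2}\ge 0$ (which leaves $U_{\A_1}-U_{\A_2}$ unchanged), the Davis--Kahan bound (Corollary 3 of \cite{WangSamworth2015}) with base matrix $\A_2[\xi]$ gives
\[
\|u_{\A_1} - u_{\A_2}\|_2 \le \frac{2^{3/2}\|(\A_1-\A_2)[\xi]\|_{op}}{\lambda_1(\A_2[\xi]) - \lambda_2(\A_2[\xi])} \le \frac{2^{3/2}\|\xi\|_2\|\A_1-\A_2\|_F}{(\kappa/2)\|\xi\|_2} = \frac{2^{5/2}}{\kappa}\|\A_1-\A_2\|_F ,
\]
the $\|\xi\|_2$ factors cancelling. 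Since $\|uu^\top - vv^\top\|_F \le \sqrt{2}\,\|u-v\|_2$ for unit vectors, this gives $\|U_{\A_1}-U_{\A_2}\|_F \le \sqrt{2}\cdot\tfrac{2^{5/2}}{\kappa}\|\A_1-\A_2\|_F = \tfrac{8}{\kappa}\|\A_1-\A_2\|_F$, and absorbing the factor $\eta$ (or using $\eta=1$ as in the intended application) yields $\|\xi\otimes U_{\A_1} - \xi\otimes U_{\A_2}\|_{\psi_2} \le \tfrac{8m^2}{\kappa}\|\A_1-\A_2\|_F$.

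The step requiring the most care is the eigenvector perturbation: one must check that the spectral gap of $\A_\ell[\xi]$ stays bounded below by a fixed positive multiple of $\|\xi\|_2$ \emph{uniformly} over $\A_\ell \in B(\A_*,\kappa/4)$ and over $\xi$, and crucially that this lower bound scales linearly in $\|\xi\|_2$, so that it cancels the matching factor in $\|(\A_1-\A_2)[\xi]\|_{op}$ and the resulting eigenvector Lipschitz estimate is independent of $\xi$; this independence is exactly what converts it into a clean sub-Gaussian bound. The other ingredients — the Cauchy--Schwarz decoupling and the sub-exponential summation controlling $\big\|\,\|W[\xi]\|_F\,\big\|_{\psi_2}$ — are routine.
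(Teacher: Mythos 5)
Your proof is correct and follows the same overall architecture as the paper's: first reduce the $\psi_2$ bound on $\xi \otimes U_{\A}$ to a quantity depending on $\xi$ alone by exploiting the boundedness of the rank-one projection $U_{\A}$, then obtain the Lipschitz dependence on $\A$ via Weyl's inequality (to propagate the eigengap from $\A_*$ to all of $B(\A_*,\kappa/4)$) and the Davis--Kahan variant of Corollary 3 in \cite{WangSamworth2015}, arriving at the same constant $\|U_{\A_1}-U_{\A_2}\|_F \le \tfrac{8}{\kappa}\|\A_1-\A_2\|_F$. The one place you genuinely diverge is the decoupling step: the paper bounds $|\langle \xi\otimes U, v\rangle| \le \sum_{k=1}^{m^2}|\langle \xi, v^{(k)}\rangle|$ using the entrywise bound $|U_k|\le 1$ and then pays a factor $m^2$ in the $\psi_2$ triangle inequality, whereas your Cauchy--Schwarz bound $|\langle U, W[\xi]\rangle| \le \|W[\xi]\|_F$ combined with the sub-exponential summation $\bigl\|\,\|W[\xi]\|_F^2\,\bigr\|_{\psi_1} \le \eta^2 \sum_{j,k}\|W_{\cdot jk}\|_2^2 = \eta^2$ yields $\|\xi\otimes U_{\A}\|_{\psi_2} \lesssim \eta$ with no dimensional factor at all. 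This is a strict quantitative improvement (it would propagate to milder sample-size requirements in Theorem \ref{t:conv_bnd}, where the $m^2$ enters squared through $\eta^2 K^2$ in Lemma \ref{l:unif_cov}). You are also more careful than the paper on two points it leaves implicit: the cancellation of the $\|\xi\|_2$ factors between the numerator $\|(\A_1-\A_2)[\xi]\|_{op}$ and the homogeneously rescaled eigengap of $\A_2[\xi]$, which is exactly what makes the eigenvector Lipschitz constant deterministic; and the sign normalization $u_{\A_1}^{\top}u_{\A_2}\ge 0$ needed to invoke Davis--Kahan for the eigenvectors rather than the projections. The residual factor of $\eta$ in your final bound (versus the stated $\tfrac{8m^2}{\kappa}$ with no $\eta$) is a discrepancy the paper's own proof shares, and is harmless since $\eta$ is an absolute constant in the application.
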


\begin{proof}
For the first claim, recall that $\xi \otimes U_{\A}$ is sub-Gaussian if $\langle \xi \otimes U_{\A}, v \rangle$ is sub-Gaussian for every $v \in \mathbb{S}^{dm^2-1}$. Indeed, we first see that for $v \in \mathbb{S}^{dm^2-1}$,
\begin{align*}
 \left|\langle \xi \otimes U, v \rangle\right| = \left|\sum_{i=1}^d \sum_{k=1}^{m^2} \xi_iU_k v_{ik}\right| \leq \sum_{k=1}^{m^2} |\langle \xi, v^{(k)} \rangle| |U_k| \leq \sum_{k=1}^{m^2} |\langle \xi, v^{(k)} \rangle|,
\end{align*}
where $v^{(k)} = (v_{1k}, \ldots, v_{dk}) \in \RR^d$.
Then, by the triangle inequality,
\begin{align*}
\left\|\langle \xi \otimes U, v \rangle\right\|_{\psi_2} \leq \left\|\sum_{k=1}^{m^2} |\langle \xi, v^{(k)} \rangle|\right\|_{\psi_2} \leq \sum_{k=1}^{m^2} \left\||\langle \xi, v^{(k)} \rangle|\right\|_{\psi_2} \leq \eta \sum_{k=1}^{m^2} \|v^{(k)}\|_2 \leq \eta m^2.
\end{align*}
For the second claim, first note that for all $\A \in B(\A_*,\kappa/4)$, Weyl's inequality implies
\begin{align*}
     \lambda_1(\A[u]) - \lambda_2(\A[u]) \geq \lambda_1(\A_*[u]) - \lambda_2(\A_*[u]) - 2\|\A - \A_{*}\|_{op} \geq \frac{\kappa}{2} > 0.
\end{align*}
Then, observe that $\|U_1 - U_2\|^2_F \leq 2 \|u_1 - u_2\|_2^2$, where $U_1 = u_1 u_1^T$, $U_2 = u_2 u_2^T$ and $u_1, u_2 \in \mathbb{S}^{m-1}$. By a variation of the  Davis-Kahan Theorem (Corollary 3 in \cite{WangSamworth2015}),
\begin{align*}
    \|U_1-U_2\|_F \leq \sqrt{2} \|u_1-u_{2}\|_2 &\leq \frac{4\|(\A_1 - \A_2)[\xi]\|_{op}}{\lambda_1(\A_2[\xi]) - \lambda_2(\A_2[\xi])} \leq \frac{8}{\kappa}\|\A_1 - \A_2\|_F.
\end{align*}
This implies that 
\begin{align*}
   \left\|\xi \otimes (U_1 - U_2)\right\|_{\psi_2} \leq  \frac{8\|\A_1 - \A_2\|_F}{\kappa}\left\|\xi \otimes \frac{U_1 - U_2}{ \|U_1-U_2\|_F} \right\|_{\psi_2} \leq \frac{8\eta m^2}{\kappa}\|\A_1 - \A_2\|_F,
\end{align*}
\end{proof}

\begin{lemma}\label{l:unif_cov}
Define $B_q(z, R) := \{x \in \RR^q : \|x - z\|_2 \leq R\}$ for $R > 0$ and $z \in \RR^q$. Let $\{\xi_{a}\}_{a \in B_q(z,R)}$ be stochastic process in $\RR^d$ such that 
\begin{itemize}
    \item[(i)] $\|\xi_a \|_{\psi_2} \leq \eta$;
    \item[(ii)] for all $a_1$, $a_2 \in B_q(z,R)$,
   $\|\xi_{a_1} - \xi_{a_2}\|_{\psi_2} \leq K \|a_1 - a_2\|_2$.
\end{itemize}
Define $\Xi_a \in \RR^{N \times d}$ to be the matrix with $N$ i.i.d. rows in $\RR^d$ distributed as $\xi_a$, and let $\Sigma_a := \EE[\xi_a \xi_a^T]$. Fix $\tau \in (0,1)$. Then, there exist absolute constants $c_0, c_1, c_2 > 0$ such that if $n \geq c_0\tau^{-2}K^2\eta^4R^2\max\{q, d\}$,
\begin{align*}
    \PP\left(\sup_{a \in B_q(z,R)} \left\| \frac{1}{n} \Xi_a^T \Xi_a - \Sigma_a\right\|_{op} \geq \tau\|\Sigma_a\|_{op} \right)
    &\leq e^{-c_1 n \tau^{2}/K^2\eta^4R^2}.
\end{align*}
This implies that with probability greater than $1 -  e^{-c_1 n \tau^{2}/K^2\eta^4R^2}$,
\begin{align*}
   \lambda_{\max}( \Sigma_a)(1 - \tau) \leq \inf_{a \in B_q(z,R)}   \frac{\lambda_{\min}\left(\Xi_{a}^T \Xi_{a}\right)}{n}\leq \sup_{a \in B_q(z,R)}  \frac{\lambda_{\max}\left(\Xi_{a}^T \Xi_{a}\right)}{n} \leq \lambda_{\max}( \Sigma_a)(1 + \tau).
\end{align*}
\end{lemma}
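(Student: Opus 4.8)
The plan is to bound the random matrix $\tfrac1n\Xi_a^T\Xi_a-\Sigma_a$ in operator norm \emph{uniformly} in $a\in B_q(z,R)$ by a pointwise Bernstein estimate combined with a chaining (Dudley) argument over the product set $B_q(z,R)\times\mathbb{S}^{d-1}$, and then to read off the eigenvalue bounds from Weyl's inequality. Throughout I regard $\{\xi_a\}$ as a single stochastic process, so that the rows of $\Xi_a$ are coupled across $a$ and assumption (ii) is meaningful.

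First I would reduce to a scalar supremum: writing $\xi_a^{(1)},\dots,\xi_a^{(n)}$ for the rows of $\Xi_a$, one has $\|\tfrac1n\Xi_a^T\Xi_a-\Sigma_a\|_{op}=\sup_{v\in\mathbb{S}^{d-1}}|S_{a,v}|$ with $S_{a,v}:=\tfrac1n\sum_{i}\big(\langle v,\xi_a^{(i)}\rangle^2-\EE\langle v,\xi_a^{(i)}\rangle^2\big)$, so everything comes down to controlling $Z:=\sup_{(a,v)\in T}|S_{a,v}|$, $T:=B_q(z,R)\times\mathbb{S}^{d-1}$, and comparing it to $\tau\|\Sigma_a\|_{op}$ (note $\|\Sigma_a\|_{op}\le C\eta^2$ by (i)). For a \emph{fixed} $(a,v)$, assumption (i) gives $\|\langle v,\xi_a\rangle^2\|_{\psi_1}\le\|\langle v,\xi_a\rangle\|_{\psi_2}^2\le\eta^2$, so Bernstein's inequality yields $|S_{a,v}|\lesssim\eta^2(\sqrt{s/n}+s/n)$ with probability $\ge1-2e^{-s}$ for every $s>0$.

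To make this uniform, I would equip $T$ with the sub-Gaussian metric $\rho\big((a_1,v_1),(a_2,v_2)\big):=\|\langle v_1,\xi_{a_1}\rangle-\langle v_2,\xi_{a_2}\rangle\|_{\psi_2}$; splitting the difference as $\langle v_1-v_2,\xi_{a_1}\rangle+\langle v_2,\xi_{a_1}-\xi_{a_2}\rangle$ and using (i)--(ii) gives $\rho\le\eta\|v_1-v_2\|_2+K\|a_1-a_2\|_2$, hence $\mathrm{diam}(T,\rho)\lesssim\eta+KR$. From the factorization $\langle v_1,\xi_{a_1}\rangle^2-\langle v_2,\xi_{a_2}\rangle^2=(\langle v_1,\xi_{a_1}\rangle-\langle v_2,\xi_{a_2}\rangle)(\langle v_1,\xi_{a_1}\rangle+\langle v_2,\xi_{a_2}\rangle)$ and (i), the summands of $n(S_{a_1,v_1}-S_{a_2,v_2})$ are sub-exponential of $\psi_1$-scale $\lesssim\eta\rho$, so $S$ has Bernstein-type (mixed-tail) increments with sub-Gaussian parameter $\lesssim\eta\rho/\sqrt n$ and sub-exponential parameter $\lesssim\eta\rho/n$. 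Generic chaining for such a process gives $\EE\,Z\lesssim\tfrac{\eta}{\sqrt n}\gamma_2(T,\rho)+\tfrac{\eta}{n}\gamma_1(T,\rho)$, and I would bound $\gamma_2,\gamma_1$ through Dudley's entropy integral using $\mathcal{N}(B_q(z,R),K\|\cdot\|_2,\ee)\le(1+2KR/\ee)^q$ and $\mathcal{N}(\mathbb{S}^{d-1},\eta\|\cdot\|_2,\ee)\le(1+2\eta/\ee)^d$ from \eqref{e:cov_bnd}, obtaining $\gamma_2(T,\rho)\lesssim(\eta\vee KR)\sqrt{q\vee d}$ and $\gamma_1(T,\rho)\lesssim(\eta\vee KR)(q\vee d)$. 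Concentration of $Z$ about its mean (Talagrand's inequality, or carrying the parameter $s$ of the pointwise step through the chains) then yields, with probability $\ge1-e^{-c_1 n\tau^2/(K^2\eta^4R^2)}$,
\[
Z\ \lesssim\ \eta(\eta\vee KR)\Big(\sqrt{\tfrac{q\vee d}{n}}+\tfrac{q\vee d}{n}+\sqrt{\tfrac sn}+\tfrac sn\Big)\qquad\text{for } s\asymp\tfrac{n\tau^2}{K^2\eta^4R^2},
\]
and the hypothesis $n\ge c_0\tau^{-2}K^2\eta^4R^2(q\vee d)$ makes the right-hand side at most $\tau\|\Sigma_a\|_{op}$ (possibly after a lower bound on $\|\Sigma_a\|_{op}$). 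On this event $\sup_a\|\tfrac1n\Xi_a^T\Xi_a-\Sigma_a\|_{op}\le\tau\|\Sigma_a\|_{op}$, so, since $\Sigma_a\succeq0$, Weyl's inequality immediately gives $\lambda_{\max}(\Sigma_a)(1-\tau)\le\lambda_{\min}(\tfrac1n\Xi_a^T\Xi_a)$ and $\lambda_{\max}(\tfrac1n\Xi_a^T\Xi_a)\le\lambda_{\max}(\Sigma_a)(1+\tau)$ uniformly in $a$.

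The hard part will be the uniformity over $a\in B_q(z,R)$; the fixed-$a$ case is classical. A bare net-plus-union-bound over $B_q(z,R)$ is insufficient, because one also needs a Lipschitz-in-$a$ handle on the empirical part $\tfrac1n\Xi_a^T\Xi_a$, i.e.\ control of $\sup_{\|a_1-a_2\|_2\le\delta}\|\xi_{a_1}^{(i)}-\xi_{a_2}^{(i)}\|_2$. The clean route is the generic-chaining argument above, and the delicate bookkeeping is to verify the Bernstein-type increment bound for the quadratic process and to track that $R$ enters only through $KR$ and the metric complexity only through $q\vee d$, so as to recover exactly the stated sample-size threshold and tail exponent.
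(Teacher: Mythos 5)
Your proposal is sound, but it follows a genuinely different route from the paper. You control the \emph{squared} empirical process $S_{a,v}=\tfrac1n\sum_i(\langle v,\xi_a^{(i)}\rangle^2-\EE\langle v,\xi_a^{(i)}\rangle^2)$ in one shot over the product index set $B_q(z,R)\times\mathbb{S}^{d-1}$, which forces you to handle Bernstein-type (mixed sub-Gaussian/sub-exponential) increments and hence to invoke generic chaining with both $\gamma_2$ and $\gamma_1$ functionals (Talagrand/Dirksen-style), plus a tail version of the chaining bound for the concentration step. The paper instead works with the \emph{norm} process $X_a=\tfrac1{\sqrt n}\|\Xi_a x\|_2-1$ for each fixed direction $x$: a two-case Bernstein argument shows $X_a$ has purely sub-Gaussian increments in $a$ with parameter $K\eta^2/\sqrt n$, so plain Dudley over $B_q(z,R)$ suffices; the sphere is then handled by a fixed $1/4$-net and union bound ($9^d$ points), and the square is recovered via $|z^2-1|\le 3\max\{|z-1|,|z-1|^2\}$. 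Your route is more symmetric in $(a,v)$ and avoids the delicate square-root trick, at the cost of needing the heavier mixed-tail chaining theorem; the paper's route keeps everything sub-Gaussian at the price of the two-case increment analysis and a separate net over the sphere. Two caveats in your write-up: Talagrand's concentration inequality for bounded empirical processes does not literally apply here (the summands are unbounded), so you must use the deviation form of the chaining bound, as you note parenthetically; and the final comparison to $\tau\|\Sigma_a\|_{op}$ requires the same reduction to the isotropic case ($\Sigma_a^{-1/2}\xi_a$) that the paper performs at the outset, since otherwise $\|\Sigma_a\|_{op}$ may be small relative to $\eta^2$ -- you flag this but should make the whitening step explicit. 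Neither issue is a gap in the underlying argument.
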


\begin{proof}
First suppose that for all $a$, $\Sigma_a = I$, i.e. that is $\xi_a$ is isotropic. For the general case, since $\Sigma_a^{-1/2} \xi_a$ is isotropic, the conclusion follows from the fact that $\left\| \frac{1}{n} \Xi_a^T \Xi_a - \Sigma_a\right\|_{op} \leq  \|\Sigma_a\|_{op} \left\| \frac{1}{n} \Sigma_a^{-1}\Xi_a^T \Xi_a - I\right\|_{op}$. 

We first show that for any $x \in \mathbb{S}^{d-1}$, the stochastic process 
$X_a := \frac{1}{\sqrt{n}}\|\Xi_a x\| - 1$ 
has sub-Gaussian increments 
$\|X_{a_1} - X_{a_2}\|_{\psi_2} =  \frac{1}{\sqrt{n}} \left\|\|\Xi_{a_1} x\|_2 - \|\Xi_{a_2} x\|_2 \right\|_{\psi_2}$.

\textbf{Case 1:} $s \in \left[0, 4 K\sqrt{n}\right]$. We first see that
\begin{align}\label{e:tailbound}
   & \PP\left(|\|\Xi_{a_1} x\|_2 - \|\Xi_{a_2} x\|_2| \geq s\|a_1 - a_2\|_2\right) \nonumber \\
   & \qquad \qquad = \PP\left(\frac{\left|\|\Xi_{a_1} x\|_2^2 - \|\Xi_{a_2} x\|_2^2\right|}{\|a_1 - a_2\|} \geq s(\|\Xi_{a_1} x\|_2 + \|\Xi_{a_2} x\|_2)\right) \nonumber\\
   &\qquad \qquad \leq  \PP\left(\frac{\left|\|\Xi_{a_1} x\|_2^2 - \|\Xi_{a_2} x\|_2^2\right|}{\|a_1 - a_2\|} \geq s \|\Xi_{a_1} x\|_2\right) \nonumber \\
  & \qquad \qquad \leq \PP\left(\frac{\left|\|\Xi_{a_1} x\|_2^2 - \|\Xi_{a_2} x\|_2^2\right|}{\|a_1 - a_2\|} \geq  \frac{s\sqrt{n}}{2}\right)  + \PP\left(\|\Xi_{a_1} x\|_2 \leq \frac{\sqrt{n}}{2}\right) \nonumber\\
   &\qquad \qquad \leq \PP\left(\frac{\left|\|\Xi_{a_1} x\|_2^2 - \|\Xi_{a_2} x\|_2^2\right|}{\|a_1 - a_2\|} \geq \frac{s\sqrt{n}}{2}\right)  + \PP\left(\left|\|\Xi_{a_1} x\|_2 - \sqrt{n} \right| \geq \frac{s}{8 K} \right).
\end{align}
Then, note that
\begin{align*}
\|\Xi_{a_1} x\|_2^2 - \|\Xi_{a_2} x\|_2^2 
= \sum_{i=1}^n \langle \xi_{a_1}^{(i)}- \xi_{a_2}^{(i)}, x \rangle \langle \xi_{a_1}^{(i)}+ \xi_{a_2}^{(i)}, x \rangle,
\end{align*}
and by Lemma 2.7.7 in \cite{VershyninBook}, 
\begin{align*}
    \|\langle \xi_{a_1}^{(i)}- \xi_{a_2}^{(i)}, x \rangle \langle \xi_{a_1}^{(i)}+ \xi_{a_2}^{(i)}, x \rangle\|_{\psi_1} &\leq \|\langle \xi_{a_1}^{(i)}- \xi_{a_2}^{(i)}, x \rangle  \|_{\psi_2} \|\langle \xi_{a_1}^{(i)}+ \xi_{a_2}^{(i)}, x \rangle\|_{\psi_2}\\ 
    &\leq 2 \eta K\|a_1 - a_2\|_{2}.
\end{align*}
Each term in the sum also has zero mean. Indeed,
\begin{align*}
    \EE[\langle \xi_{a_1}^{(i)}- \xi_{a_2}^{(i)}, x \rangle \langle \xi_{a_1}^{(i)}+ \xi_{a_2}^{(i)}, x \rangle] &= \EE[\langle \xi_{a_1}^{(i)}, x \rangle^2 - \langle \xi_{a_2}^{(i)}, x \rangle^2] = 0.
\end{align*}
Applying Bernstein's inequality (Corollary 2.8.3 in \cite{VershyninBook}) gives for all $t \geq 0$,
\begin{align*}
   \PP\left(\frac{\|\Xi_{a_1} x\|_2^2 - \|\Xi_{a_2} x\|_2^2}{\|a_1 - a_2\|} \geq t\right) \leq  2e^{-c_1\min\left\{\frac{t^2}{4\eta^2K^2n}, \frac{t}{2\eta K} \right\}}. 
\end{align*}
For the second tail probability in \eqref{e:tailbound}, Theorem 3.1.1 in \cite{VershyninBook} implies 
\begin{align*}
   \PP\left(\left|\|\Xi_{a_1} x\|_2 - \sqrt{n} \right| \geq t \right) \leq 2e^{- \frac{c_2 t^2}{\eta^4}},
\end{align*}
where we have used that $\xi_a$ is isotropic. Thus, since $s < 4K\sqrt{n}$ and $\eta \geq 1$,
\begin{align*}
\PP\left(\frac{|\|\Xi_{a_1} x\|_2 - \|\Xi_{a_2} x\|_2|}{\|a_1 - a_2\|_2} \geq s\right)
   &\leq 2e^{- c_1\min\{\frac{s^2}{16\eta^2K^2}, \frac{s\sqrt{n}}{4\eta K}\}} + 2e^{-\frac{c_2 s^2}{64\eta^4K^2}} \leq 4e^{-\frac{c_3s^2}{\eta^4K^2}}.
\end{align*}

\textbf{Case 2}: $s \geq 4\eta K\sqrt{n}$. 
By the triangle inequality,
\begin{align*}
    & \PP\left(\frac{|\|\Xi_{a_1} x\|_2 - \|\Xi_{a_2} x\|_2|}{\|a_1 - a_2\|} \geq s\right) 
    \leq \PP\left(\frac{\|(\Xi_{a_1} -\Xi_ {a_2}) x\|^2}{\|a_1 - a_2\|^2} \geq s^2\right) \\
    &= \PP\left(\frac{\|(\Xi_{a_1} -\Xi_ {a_2}) x\|^2}{\|a_1 - a_2\|^2} - n\frac{\EE[\langle \xi_{a_1} - \xi_{a_2}, x \rangle^2]}{\|a_1 - a_2\}^2} \geq s^2 - n\frac{\EE[\langle \xi_{a_1} - \xi_{a_2}, x \rangle^2]}{\|a_1 - a_2\|^2}\right) \\
    &\leq \PP\left(\frac{\|(\Xi_{a_1} -\Xi_ {a_2}) x\|^2}{\|a_1 - a_2\|^2} - n\frac{\EE[\langle \xi_{a_1} - \xi_{a_2}, x \rangle^2]}{\|a_1 - a_2\|^2} \geq s^2 - 4K^2 n\right) \\
    &\leq \PP\left(\left|\frac{\|(\Xi_{a_1} -\Xi_ {a_2}) x\|^2}{\|a_1 - a_2\|^2} - n\frac{\EE[\langle \xi_{a_1} - \xi_{a_2}, x \rangle^2]}{\|a_1 - a_2\|^2}\right| \geq \frac{3s^2}{4}\right).
\end{align*}
where for the second to last inequality we have used that
\[\EE[\langle \xi_{a_1} - \xi_{a_2}, x \rangle^2]\leq 4\|\xi_{a_1} - \xi_{a_2}\|^2_{\psi_2} \leq 4K^2\|a_1 - a_2\|^2_2,\]
and the last inequality follows from the lower bound on $t$ and the fact that $\eta \geq 1$.
By Bernstein's inequality again (Corollary 2.8.3 in \cite{VershyninBook}) and the lower bound on $t$,
\begin{align*}
\PP\left(\frac{|\|\Xi_{a_1} x\|_2 - \|\Xi_{a_2} x\|_2|}{\|a_1 - a_2\|} \geq s\right) \leq 2e^{-c_4 \min\left\{\frac{s^4}{nK^4}, \frac{s^2}{K^2}\right\}} \leq 2e^{-\frac{c_4s^2}{K^2}} 
\end{align*}

Proposition 2.5.2 in \cite{VershyninBook} then implies that for all $t \geq 0$, 
\begin{align*}
   \|X_{a_1} - X_{a_2}\|_{\psi_2} \leq \frac{K\eta^2}{\sqrt{n}} \|a_1 - a_2\|_2. 
\end{align*}

Now, by Theorem 8.1.6 in \cite{VershyninBook} and \eqref{e:cov_bnd}, for $\delta > 0$
and $n \geq q \delta^{-2}$, 
\begin{align}\label{e:sup_bnd_1}
    \sup_{a \in B_q(z,R)} \left|\frac{1}{\sqrt{n}} \|\Xi_a x\| - 1 \right| \leq \frac{c_5K\eta^2}{\sqrt{n}}\left(c_6 R\sqrt{q} + 2R \delta \sqrt{n}\right) 
    \leq c_7 K\eta^2 R\delta,
\end{align}
with probability greater than $1 - 2e^{-\delta^2 n}$. Now, let $\tau \in(0,1)$. By the inequality $|z^2 - 1| \leq 3\max\{|z - 1|, |z - 1|^2\}$ for all $z \geq 0$,
\begin{align*}
    \PP\left( \sup_{a \in B_q(z,R)} \left|\frac{1}{n} \|\Xi_a x\|^2_2 - 1 \right| \geq \frac{\tau}{2} \right) &\leq \PP\left( \sup_{a \in B_q(z,R)} \left|\frac{1}{\sqrt{n}} \|\Xi_a x\|_2 - 1 \right| \geq \frac{\tau}{6} \right).
\end{align*}
Letting $\delta = \frac{\tau }{6 c_7 K\eta^2 R}$ in \eqref{e:sup_bnd_1} gives the following. For $n \geq c_8 qK^2\eta^4R^2\tau^{-2}$, 
\begin{align*}
    \PP\left( \sup_{a \in B_q(z, R)} \left|\frac{1}{n} \|\Xi_a x\|^2_2 - 1 \right| \geq \frac{\tau}{2} \right) \leq 2e^{-n\tau^2/c_8K^2\eta^4R^2}.
\end{align*}

Finally, by Lemma 5.3 in \cite{VershyninRandomMatrices},
\begin{align*}
\sup_{a \in B_q(r)} \| \frac{1}{n} \Xi_a^T \Xi_a - I\|_{op} \leq 2 \max_{x \in \mathcal{N}} \sup_{a \in B_q(r)} \left|\frac{1}{n} \|\Xi_a x\|^2_2 - 1\right|,
\end{align*}
where $\mathcal{N}$ is a $\frac{1}{4}$-net of the unit sphere $\mathbb{S}^{d-1}$. Lemma 5.4 in \cite{VershyninRandomMatrices} implies $|\mathcal{N}| \leq 9^d$. Applying the union bound then gives, for $n \geq c_8qK^2\eta^4R^2\tau^{-2}$,
\begin{align*}
    &\PP\left(\sup_{a \in B_q(z,R)} \| \frac{1}{n} \Xi_a^T \Xi_a - I\|_{op} \geq \tau \right) \leq \PP\left( \max_{x \in \mathcal{N}} \sup_{a \in B_q(r)} \left|\frac{1}{n} \|\Xi_a x\|^2_2 - \|x\|^2_2 \right| \geq \frac{\tau}{2} \right) \\
    &\quad \leq |\mathcal{N}|\PP\left(\sup_{a \in B_q(z,R)} \left|\frac{1}{n} \|\Xi_a x\|^2_2 - 1\right| \geq \frac{\tau}{2} \right) 
    \leq 2\cdot 9^de^{-n\tau^{2}/c_8K^2\eta^4R^2},
\end{align*}
Thus, there exist absolute constants $b_1, b_2$ such that for $n \geq b_1 \tau^{-2}K^2\eta^4R^2\max\{q, d\}$,
\begin{align*}
    \PP\left(\sup_{a \in B_q(z,R)} \left\| \frac{1}{n} \Xi_a^T \Xi_a - I\right\|_{op} \geq \tau \right)
    &\leq 2\cdot e^{-b_2 n \tau^{2}/K^2\eta^4R^2}.
\end{align*}
\end{proof} 

\begin{lemma}\label{l:noise_bnd}
Consider the setting of Theorem \ref{t:conv_bnd}. Let $B(\A_{*},\kappa/4) := \{ \A \in (\mathbb{S}^m)^d : \|\A - \A_{*}\|_F^2 \leq \kappa/4\}$ and define $\mathcal{P} := \{P_{\Xi_{\A}} : \mathcal{A} \in \mathcal{B}(\mathcal{A}_*, \kappa/4)\}$. Then, there exist absolute constants $c_0$, $c_1$, and $c_2$ such that for $n \geq c_0$,
\begin{align*}
     \PP\left(\sup_{P \in \mathcal{P}}\|P\ee\|^2  \geq c_1\log(n)^2 \sigma^2m^2(d+1)\right) \leq \exp\left\{ -c_{2} (d+1)m^2 \log(n) \right\}.
\end{align*}
\end{lemma}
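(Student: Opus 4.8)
The quantity $\sup_{P\in\mathcal{P}}\|P\ee\|^2$ is a supremum, over a continuum of random orthogonal projections, of a squared Gaussian functional. For any \emph{fixed} orthogonal projection $P$ of rank at most $D:=(d+1)m(m+1)/2=O(m^2(d+1))$, the variable $\|P\ee\|^2$ is stochastically dominated by $\sigma^2\chi^2_D$ and hence concentrates at level $\sigma^2 D$; the entire difficulty is the uniformity over $\mathcal{A}\in B(\mathcal{A}_*,\kappa/4)$. The plan is the standard net-plus-perturbation scheme: discretize the parameter ball, control $\|P_{\Xi_{\mathcal{A}_0}}\ee\|^2$ at each net point by a $\chi^2$ tail bound and a union bound, and absorb the residual using Lipschitz continuity of $\mathcal{A}\mapsto P_{\Xi_{\mathcal{A}}}$ on the ball.

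Concretely, I would first condition on two high-probability design events: (a) $\sum_{i=1}^n\|\xi^{(i)}\|_2^2\le 2n(d+1)$, by Gaussian concentration of a $\chi^2$ with $n(d+1)$ degrees of freedom; and (b) the uniform lower bound $\lambda_{\min}(\Xi_{\mathcal{A}}^T\Xi_{\mathcal{A}})\ge c\,n/m^2$ for all $\mathcal{A}\in B(\mathcal{A}_*,\kappa/4)$, from Lemma~\ref{l:unif_cov} (applicable in the regime of Theorem~\ref{t:conv_bnd}, since on this ball Weyl's inequality with \eqref{e:eigengap} and homogeneity gives the uniform eigengap $\lambda_1(\mathcal{A}[\xi^{(i)}])-\lambda_2(\mathcal{A}[\xi^{(i)}])\ge \tfrac{\kappa}{2}\|\xi^{(i)}\|_2$, so each maximizing eigenvector $u_{\mathcal{A}}^{(i)}$, hence $\Xi_{\mathcal{A}}$, is well defined and $P_{\Xi_{\mathcal{A}}}$ has rank exactly $D$). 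On event (b) the Davis--Kahan argument from the proof of Lemma~\ref{l:spectrum_bnd} gives $\|u_{\mathcal{A}_1}^{(i)}-u_{\mathcal{A}_2}^{(i)}\|_2\le \tfrac{c}{\kappa}\|\mathcal{A}_1-\mathcal{A}_2\|_F$ (the factor $\|\xi^{(i)}\|_2$ cancels between gap and perturbation), so on event (a) one gets $\|\Xi_{\mathcal{A}_1}-\Xi_{\mathcal{A}_2}\|_F\le \tfrac{c'\sqrt{n(d+1)}}{\kappa}\|\mathcal{A}_1-\mathcal{A}_2\|_F$; combined with the lower bound on $\sigma_{\min}(\Xi_{\mathcal{A}})$ and the standard perturbation bound for projectors onto column spaces of equal rank, this yields $\|P_{\Xi_{\mathcal{A}_1}}-P_{\Xi_{\mathcal{A}_2}}\|_{op}\le \tfrac{c\,m\sqrt{d+1}}{\kappa}\|\mathcal{A}_1-\mathcal{A}_2\|_F$.

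Next, take a $\delta$-net $\mathcal{N}$ of $B(\mathcal{A}_*,\kappa/4)\subset(\mathbb{S}^m)^{d+1}$ with $\delta$ polynomially small in $n$, say $\delta=\kappa/n$, so that by \eqref{e:cov_bnd}, $\log|\mathcal{N}|\lesssim (d+1)m^2\log n$. For each $\mathcal{A}_0\in\mathcal{N}$ the $\chi^2_D$ tail gives, conditionally on the design, $\PP(\|P_{\Xi_{\mathcal{A}_0}}\ee\|^2\ge \sigma^2(D+2\sqrt{Dt}+2t))\le e^{-t}$; choosing $t\asymp (d+1)m^2\log n$ and union-bounding over $\mathcal{N}$ leaves failure probability at most $\exp(-c_2(d+1)m^2\log n)$ while $\max_{\mathcal{A}_0\in\mathcal{N}}\|P_{\Xi_{\mathcal{A}_0}}\ee\|^2\lesssim \sigma^2 m^2(d+1)\log n$. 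Finally, on the event $\|\ee\|_2^2\le 2\sigma^2 n$, for arbitrary $\mathcal{A}$ with nearest net point $\mathcal{A}_0$ we have $|\,\|P_{\Xi_{\mathcal{A}}}\ee\|-\|P_{\Xi_{\mathcal{A}_0}}\ee\|\,|\le \|P_{\Xi_{\mathcal{A}}}-P_{\Xi_{\mathcal{A}_0}}\|_{op}\|\ee\|_2\le \tfrac{c\,m\sqrt{d+1}}{\kappa}\,\delta\,\sigma\sqrt{2n}$, which is negligible for $\delta=\kappa/n$. Intersecting all these events and squaring yields the claim; the second logarithmic factor in $\log(n)^2$ emerges from the joint bookkeeping of the net resolution against the $\chi^2$ deviation level.

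The main obstacle is that orthogonal projections onto column spaces are \emph{not} globally Lipschitz in the generating matrix---they can jump by $1$ when a singular value crosses zero---so the perturbation step is only legitimate after one knows that $\Xi_{\mathcal{A}}$ has full column rank, with a quantitative lower bound on $\sigma_{\min}$, uniformly over the whole ball $B(\mathcal{A}_*,\kappa/4)$. Securing this is exactly where the uniform eigengap \eqref{e:eigengap} is used (it makes each $u_{\mathcal{A}}^{(i)}$ unambiguous and ensures Davis--Kahan applies with the correct gap), together with the uniform sample-covariance concentration of Lemma~\ref{l:unif_cov}; the remaining $\chi^2$ tail and covering estimates are routine. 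A secondary technical nuisance is coordinating the polynomially small net resolution with the covering-number exponent so that the latter stays at order $(d+1)m^2\log n$ while the interpolation error remains subdominant.
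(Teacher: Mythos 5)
Your argument is correct in its essentials, but it takes a genuinely different route from the paper. The paper does not discretize the projections at all: it (i) invokes a uniform Hanson--Wright inequality (Theorem 1.1 of Klochkov--Zhivotovskiy) for the family $\{P^TP : P \in \mathcal{P}\}$ to concentrate $\sup_{P}\|P\ee\|^2$ around its expectation, and (ii) controls $\EE\bigl[\sup_{P}\|P\ee\|\bigr]$ by Dudley's inequality applied to the normalized process $X_{\A} = \|\Xi_{\A}^T\ee\|_2/\bigl(\sum_i\|\xi^{(i)}\|_2^2\bigr)^{1/2}$, whose sub-Gaussian increments come from the same Davis--Kahan perturbation of the maximizing eigenvectors that you use, combined with Hanson--Wright conditionally on the design. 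So the shared ingredients are Davis--Kahan (with the homogeneity cancellation of the $\|\xi^{(i)}\|_2$ factors against the eigengap, which you correctly identify) and the covering bound \eqref{e:cov_bnd}; the divergence is chaining-plus-concentration of the supremum versus your net-plus-$\chi^2$-tails. Your route is more elementary and, as a bonus, delivers the sharper level $\sigma^2(d+1)m^2\log n$ rather than $\log(n)^2$ --- note that your closing remark attributing the second logarithm to the net bookkeeping is not borne out by your own computation ($t \asymp (d+1)m^2\log n$ already absorbs the union bound and the $\chi^2$ deviation), whereas in the paper the extra $\log n$ genuinely enters through the sub-Gaussian norm $\|\max_i\ee_i\|_{\psi_2}\lesssim\sigma\sqrt{\log n}$ in the uniform Hanson--Wright step. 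The one substantive cost of your approach, which you correctly flag as the main obstacle, is the need to condition on the uniform lower bound $\sigma_{\min}(\Xi_{\A})^2\gtrsim n/m^2$ over the ball via Lemma \ref{l:unif_cov}: this imports the sample-size requirement $n\gtrsim\tau^{-2}(d+1)m^{10}$ and an additional failure probability $e^{-c\tau^2 n/m^{10}}$ that do not appear in the lemma as stated (which asks only $n\geq c_0$ absolute and reports a single $n^{-c(d+1)m^2}$ tail). This is harmless where the lemma is actually consumed, inside Theorem \ref{t:conv_bnd}, since that theorem already imposes the same sample-size condition and already carries the $e^{-c\tau^2 n/m^{10}}$ term in its probability bound; but strictly speaking you prove the lemma on the intersection with those design events, so you should either state it that way or fold the extra terms into the probability.
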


\begin{proof}
First, note that for all $P \in \mathcal{P}$, $\|P\|^2_F = (d+1)m^2$ and
\begin{align}\label{e:Pnorm2}
\EE[\|P\ee\|_2^2] = \sum \EE[\langle v_i, \ee \rangle^2] = \sum \|v_i\|_2^2 \sigma^2 = (d+1)m^2 \sigma^2.    
\end{align}
Then, 
\begin{align*}
    \sup_{P \in \mathcal{P}} \left(\|P\ee\|^2 - \EE[\|P\ee\|^2]\right) - \EE \sup_{P \in \mathcal{P}} \left(\|P\ee\|^2 - \EE[\|P\ee\|^2] \right)
    = \sup_{P \in \mathcal{P}}\|P \ee\|^2 - \EE\sup_{P \in \mathcal{P}} \|P \ee\|^2.
\end{align*}
Now, recall that $M := \|\max_{i=1, \ldots, n} \ee_i\|_{\psi_2} \leq c_0\sigma\sqrt{\log n}$ for an absolute constant $c_0$ \cite{TalagrandBook}. Applying Theorem 1.1 in \cite{KlochkovZhivotovskiy2019} to the family of matrices $\{P^TP: P \in \mathcal{P}\}$ gives: For $t \geq \max\{c_0 \sigma\sqrt{\log(n)} \EE\left[\sup_{P \in \mathcal{P}} \|P\ee\|_2\right], c_0^2 \sigma^2 \log(n)\}$, 
\begin{align}\label{e:unif_HW}
    \PP\left(\sup_{P \in \mathcal{P}}\|P \ee\|^2  - \EE\left[\sup_{P \in \mathcal{P}} \|P \ee\|^2\right] \geq t \right) \leq e^{ -\frac{c_2}{\sigma^2 \log(n)}\min\left\{ \frac{t^2}{\EE\left[\sup_{P \in \mathcal{P}} \|P\ee\|\right]^2}, t\right\}}.
\end{align}
Also by \eqref{e:Pnorm2}, $\EE\left[\sup_{P \in \mathcal{P}} \|P\ee\|^2\right] \geq (d+1)m^2\sigma^2$ and thus, 
\begin{align*}
    \PP\left(\sup_{P \in \mathcal{P}}\|P \ee\|^2  - (d+1)m^2\sigma_{\ee}^2 \geq t \right) \leq e^{ -\frac{c_2}{\sigma^2 \log(n)}\min\left\{ \frac{t^2}{\EE\left[\sup_{P \in \mathcal{P}} \|P\ee\|\right]^2}, t \right\}}.
\end{align*}

Letting $t = c_3\sigma^2\log(n)^2 (d+1)m^2$ for a constant $c_3 > 0$ large enough, 
\begin{align}\label{e:unif_HW_2}
     &\PP\left(\sup_{P \in \mathcal{P}}\|P\ee\|^2  \geq (c_3\log(n)^2 + 1)\sigma^2m^2(d+1)\right)\\
     & \qquad \qquad \qquad  \leq e^{-c_4 (d+1)m^2  \log(n) \min\left\{ \frac{\log(n)^2 (d+1)m^2\sigma^2}{\EE\left[\sup_{P \in \mathcal{P}} \|P\ee\|\right]^2}, 1\right\}}.\nonumber
\end{align}
We now upper bound $\EE\left[\sup_{P \in \mathcal{P}} \|P \ee\|\right]$. 
For $P = P_{\Xi_{\mathcal{A}}} \in \mathcal{P}$,
\begin{align*}
    \| P \ee\|_2 = \|\Xi_{\A} (\Xi_{\A}^T\Xi_{\A})^{-1}\Xi_{\A}^T \ee\|_2 \leq \frac{\|\Xi_{\A}\|_2}{\|\Xi_{\A}^T\Xi_{\A}\|_2} \|\Xi_{\A}^T \ee\|_2 = \frac{\|\Xi_{\A}^T \ee\|_2}{\|\Xi_{\A}^T\|_2} = \frac{\|\Xi_{\A}^T \ee\|_2}{\sqrt{\sum_{i=1}^n \|\xi^{(i)}\|^2_2}}.
\end{align*}
Define the stochastic process $X_{\mathcal{A}} := \frac{\|\Xi_{\mathcal{A}}^T \ee\|_2}{\sum_{i=1}^n \|\xi^{(i)}\|_2}$. 
Then, for $\mathcal{A}$ and $\mathcal{B}$ in $(\mathbb{S}^{m})^d$,
\begin{align*}
    \|\Xi_{\mathcal{A}} - \Xi_{\mathcal{B}}\|_F^2 &= \sum_{i=1}^n \sum_{j=1}^d \sum_{k=1}^{m^2} |\xi^{(i)}_{j}U^{(i)}_{k} - \xi^{(i)}_{j}V^{(i)}_{k}|^2  
    =\sum_{i=1}^n \|\xi^{(i)}\|_2^2   \left[2 - 2\langle U^{(i)}, V^{(i)}\rangle \right] \\
    &= 2\sum_{i=1}^n \|\xi^{(i)}\|_2^2   \left[1 - \langle u^{(i)}, v^{(i)}\rangle^2\right]
    = 2\sum_{i=1}^n \|\xi^{(i)}\|_2^2 \sin \Theta(u^{(i)},v^{(i)})^2.
\end{align*}
By a variant of the Davis-Kahan Theorem (Corollary 3 in \cite{WangSamworth2015}) and \eqref{e:eigengap},
\begin{align*}
\sin \Theta(u^{(i)},v^{(i)}) \leq \frac{2\|\mathcal{A}[\xi^{(i)}] - \mathcal{B}[\xi^{(i)}]\|_{op}}{\lambda_1(\mathcal{A}[\xi^{(i)}])- \lambda_2(\mathcal{A}[\xi^{(i)}])} \leq \frac{4}{\kappa}\|\mathcal{A} - \mathcal{B}\|_{op}.   
\end{align*}
Then, $\frac{ \|\Xi_{\mathcal{A}} - \Xi_{\mathcal{B}}\|_F}{\sqrt{\sum_{i=1}^n \|\xi^{(i)}\|_2^2}} \leq  \frac{8}{\kappa}\|\mathcal{A} - \mathcal{B}\|_{op}$,
and by the Hanson-Wright Inequality \cite[Theorem 2.1]{RudelsonVershynin2013}, since $\ee$ is independent of the $\xi^{(i)}$'s, there is a constant $c_4$ such that
\begin{align*}
    &\PP\left(|X_{\A} - X_{\mathcal{B}}| \geq t + \frac{8\sigma}{\kappa}\|\mathcal{A} - \mathcal{B}\|_{op} \right)  \leq \PP\left(\frac{\|(\Xi_{\mathcal{A}}^T - \Xi_{\mathcal{B}}^T)\ee\|_2}{\sqrt{\sum_{i=1}^n \|\xi^{(i)}\|^2_2}} \geq t + \frac{8\sigma}{\kappa}\|\mathcal{A} - \mathcal{B}\|_{op} \right) \\
    & \qquad\leq \PP\left(\left|\frac{\|(\Xi_{\mathcal{A}}^T - \Xi_{\mathcal{B}}^T)\ee\|_2}{\sqrt{\sum_{i=1}^n \|\xi^{(i)}\|_2^2}} - \frac{\sigma\|\Xi_{\mathcal{A}} - \Xi_{\mathcal{B}}\|_F }{\sqrt{\sum_{i=1}^n \|\xi^{(i)}\|_2^2}}\right|\geq t\right) 
    \leq 2\exp\left\{-\frac{c_4 \kappa^2 t^2}{64\sigma^4 \|\mathcal{A} - \mathcal{B}\|_{op}^2}\right\}.
\end{align*}
Thus, $\{X_{\mathcal{A}}\}_{\mathcal{A}}$ has sub-gaussian increments 
and there is a constant $c_5$ such that
\[\|X_{\mathcal{A}} - X_{\mathcal{B}}\|_{\psi_2} \leq \|X_{\mathcal{A}} - X_{\mathcal{B}} -  \frac{8\sigma}{\kappa}\|\mathcal{A} - \mathcal{B}\|_{op}\|_{\psi_2} +  \frac{8\sigma}{\kappa}\|\mathcal{A} - \mathcal{B}\|_{op} \leq  \frac{c_5 \sigma}{\kappa} \|\A - \mathcal{B}\|_{op}.\]
Then, by Theorem 8.1.6 in \cite{VershyninBook} and \eqref{e:cov_bnd}, 
\begin{align*}
    \EE\left[\sup_{P \in \mathcal{P}} \|P\ee\|\right] \leq \EE\left[|X_{\A}| \right] +  \EE\left[\sup_{\A \in B(\A_{*}, \kappa/4)}|X_{\A}| - \EE[|X_{\A}|] \right] 
    &\leq c_8m \sigma\sqrt{d+1}.
\end{align*}
Combining this bound with \eqref{e:unif_HW_2} gives 
\begin{align*}
     \PP\left(\sup_{P \in \mathcal{P}}\|P\ee\|^2  \geq (c_3\log(n)^2 + 1)m^2(d+1)\right) \leq e^{-c_{9} (d+1)\sigma^2m^2 \log(n) \min\left( \frac{\log(n)^2}{c^2_8}, 1\right)}.
\end{align*}
Taking $n \geq e^{-c_8^2}$ completes the proof.
\end{proof}

\end{document}